\numberwithin{equation}{section}
\newtheorem{theorem}{Theorem}
\newtheorem{lemma}{Lemma}
\newtheorem{corr}{Corollary}
\newtheorem{rem}{Remark}
\newtheorem{deff}{Definition}
\newtheorem{exempel}{Example}
\newcommand{\fa}{\mathscr F}
\newcommand{\ex}{\mathbb E}
\newcommand{\pr}{\mathbb P}
\newcommand{\real}{\mathbb R}
\newcommand{\bigo}{\mathcal O}
\newcommand{\lilo}{ \mbox{{\tiny $\mathcal O$}}  } 
\newcommand{\ind}[1]{\textbf I_{#1}}
\newcommand{\limd}{\stackrel{\text d}{\to}}
\newcommand{\rymd}{\vspace{0.2cm}}
\newcommand{\e}{\mathcal E}
\newcommand{\mand}{\quad\mbox{and}\quad}
\newcommand{\sgn}{\mathrm{sgn}}
\newenvironment{romenumerate}{\begin{enumerate}
 }{\end{enumerate}}
\title{\textsc{Limit theorems for stochastic approximation algorithms}}
\author{Henrik Renlund}
\begin{document}

\pagenumbering{roman}

\maketitle

\begin{abstract}
 We prove a central limit theorem applicable to one dimensional stochastic approximation algorithms
that converge to a point where the error terms of the algorithm do not vanish. We show 
how this applies to a certain class of these algorithms that in particular covers a generalized Pólya urn model,
which is also discussed.  In addition, we  show how to scale these algorithms in 
some cases where we cannot determine the limiting distribution but expect it to be non-normal.
\end{abstract}

\vspace{2cm}

\tableofcontents

\newpage

$\phantom.$

\thispagestyle{empty}

\newpage

\pagenumbering{arabic}
\setcounter{page}{1}

\section{Introduction and preliminaries} \label{prel}
The following paper is a continuation of the work \cite{lic}, which deals with convergence of
 stochastic approximation algorithms, as defined in Definition \ref{def} below. A stochastic approximation
algorithm may be said to be a stochastic process that on average follows a solution curve to an ordinary differential
equation. One may consult e.g.\ \cite{Ben99} for a concise treatment along this line of thought.  

\begin{deff}\label{def} $\phantom{1}$\\
A stochastic approximation algorithm $\{X_n\}$ is a 
stochastic process taking values in $[0,1]$ and adapted
to the filtration $\{\fa_n\}$ that satisfies
\begin{equation}\label{defeq}
  X_{n+1}-X_n=\gamma_{n+1}[f(X_n)+U_{n+1}],
\end{equation}
where $\gamma_n,U_n\in\fa_n, f:[0,1]\to\real$ and 
the following conditions hold a.s.
\begin{itemize}
 \item[(i)] $c_l/n\leq \gamma_n\leq c_u/n$,
 \item[(ii)] $|U_n|\leq K_u$,
 \item[(iii)] $|f(X_n)|\leq K_f$, and
 \item[(iv)] $|\ex_n(\gamma_{n+1}U_{n+1})|\leq K_e/n^2$,
\end{itemize}
where the constants $c_l,c_u,K_u,K_f,K_e$ are positive real numbers and 
$\ex_n(\cdot)$ denotes the conditional expectation $\ex(\cdot|\fa_n)$.
\end{deff}
As is shown in \cite{lic}, if the drift function  $f$ is continuous,
the limit of such a process always exists and is contained
in the zero set of $f$, i.e.\ the set $\{x: f(x)=0\}$. Certain zeros can be excluded from
the set of possible limit points, in particular the unstable ones (under additional 
assumptions\footnote{It is required that the variance of the error terms $U_n$ does not
vanish at this point. Note that the unstable zeros are only excluded in the sense that
the probability of convergence to any specific point is zero. Hence, if there are uncountably
many unstable points - if e.g.\ $f\equiv 0$ on an interval - then all we can deduce is that
there are no point masses at these points.}). 
A zero $x_u$ is said to be \emph{unstable} if the drift 
locally points away from, or is zero at,  this point, i.e.\ that $f(x)(x-x_u)\geq 0$ when $x$ is near $x_u$. 
On the contrary there is a positive probability (under additional assumptions) that the process ends up at 
a \emph{stable} zero $x_s$, i.e.\ a point where $f(x)(x-x_s)< 0$, when $x\neq x_s$ is near $x_s$, 
so that the drift locally is  pointing towards it. 

We will throughout think of this process as having a stable zero at $p$, and typically that 
$f$ is differentiable at this point. Then  \[ f(x)=-h(x)(x-p) \] where $h$ is
continuous at $p$ and $h(x)> 0$, when $x\neq p$ is close to $p$.

This paper investigates how to scale $X_n-p$ to get convergence to some non-trivial 
distribution.  Section \ref{l} contains some necessary tools. Theorem \ref{st} in Section \ref{aclt} is a  
a central limit theorem for class of processes that 
covers stochastic approximation algorithms (as defined above). Section \ref{atsaa} and \ref{genpol} show 
how this applies to stochastic approximation algorithms and an urn model, respectively. The urn model
will be specified at the end of this section. Theorem \ref{genlim} in Section \ref{alt} provides a limit theorem for stochastic
approximation algorithms, although we can not identify the limiting distribution -- there is a brief
discussion of this problem in Section \ref{anotponld}. Section \ref{limappgenpol} is concerned with 
the application of Theorem \ref{genlim} to the urn model described below. 

The proper scaling and limit of $X_n-p$ turns out to depend largely on the limit 
$\hat\gamma = \lim_n n\gamma_nh(X_{n-1})$. When it exists, $\hat\gamma>1/2$ and $\hat\gamma=1/2$ 
are associated to a central limit theorem (Theorem \ref{st}), although with different scaling in the respective
cases, whereas
$\hat\gamma\in (0,1/2)$ to is associated with convergence to some unidentified distribution (Theorem \ref{genlim}).
We have not studied what happens when $\hat\gamma=0$ or when this limit does not exist, see Remarks 
\ref{limnoll} and \ref{limnej} in Section \ref{anotponld} for comments on these respective cases. 

The application to be discussed is the following generalized P\'olya urn model.
Consider an urn with balls of two colors, white and black say. Let $W_n$ and $B_n$ denote
the number of balls of each color, white and black respectively, 
 after the $n$'th draw and consider the initial values $W_0=w_0>0$ and
$B_0=b_0>0$ to be fixed. After each draw we notice the color and replace it along with
additional balls according to the \emph{replacement matrix}
\begin{align}  & \quad\; \text{W}  \,\;\; \text{B} \nonumber \\
\begin{array}{c}
\text{W}\\ \text{B}
\end{array} &
\left( \begin{array}{c c}
a & b \\ c & d 
       \end{array} \right), \label{repmat}
 \end{align}
where $\min\{a,b,c,d\}\geq 0$. The replacement matrix (\ref{repmat}) should be interpreted as; if a
white ball is drawn it is replaced along with an additional $a$ white and $b$ black balls. If a black
ball is drawn it is replaced along with an additional $c$ white and $d$ black balls. 
Let $T_n=W_n+B_n$ denote the total number of balls at time $n$. As shown in Section 3.1 of \cite{lic},
 if $\min\{a+b,c+d\}>0$ then the fraction of white balls $X_n=W_n/T_n$ is a stochastic approximation
algorithm with
\begin{align*} 
  f(X_n) &= \ex_n[T_{n+1}(X_{n+1}-X_n)]\quad \mbox{given by} \\
  f(x) &=\alpha x^2+\beta x+c,\quad\mbox{where}\quad\alpha=c+d-a-b\quad\mbox{and}\quad\beta=a-2c-d, \\
 \gamma_{n+1} &=1/T_{n+1} \quad\mbox{and}\\
 U_{n+1}&=T_{n+1}(X_{n+1}-X_n)-f(X_n).
\end{align*}
Another quantity of interest is the second moment of the error terms $U_n$. 
This turns out to be a polynomial $\e$ in $X_n$, $\e(Z_n)=\ex_n U_{n+1}^2$, that is given by 
\[ \e(x)=x(1-x)[a-c+\alpha x]^2. \] 
It is know, e.g.\ from Theorem 6 of \cite{lic}, that - apart from the case when the replacement matrix
is a multiple of the identity matrix (in which case $X_n$ converges to a beta distribution) - the
limit $\lim X_n$ has a one point distribution at the unique stable zero of $f$, which we 
will denote by $p$. Recall that $h$ is defined via the relation $f(x)=-h(x)(x-p)$. As $f$ is a polynomial 
and thus infinitely differentiable at $p$ with $f'(p)<0$, $h$ will be differentiable 
at $p$ as well as positive near $p$.  

Notice that as $X_n\to p$ we have $T_n/n\to (a+b)p+(c+d)(1-p)=:\gamma^{-1}$ so that in particular
$n\gamma_n\to \gamma$. Since $X_n\to p$, continuity of $h$ implies that $h(X_n)\to h(p)$ and
thus $\hat\gamma_n = n\gamma_nh(X_{n-1})\to \gamma h(p)=-\gamma f'(p)$.

\subsection{Lemmas}\label{l}

Let $Q_n=X_n-p$ and rewrite (\ref{defeq}) to  
\begin{align}
 X_{n+1}-p&=X_n-p+\gamma_{n+1}[f(X_n)+U_{n+1}] 
=[1-\gamma_{n+1}h(X_n)]Q_n+\gamma_{n+1}U_{n+1} \nonumber \\
\Rightarrow \quad Q_{n+1}&=\left[1-\frac{\hat\gamma_{n+1}}{n+1}\right]Q_n+\frac{\hat U_{n+1}}{n+1}, \label{q:et}
\end{align}
where 
\[ \hat\gamma_{n}=n\gamma_{n}h(X_{n-1})\quad\mbox{and}\quad
\hat U_{n}=n\gamma_{n}U_{n}.\]

Equation (\ref{q:et}) explains  our interest in recursive sequences of the following form:
 \[ b_{n+1}=(1-A/n)b_n+B/n. \]
It is easy to see that 
$b_{n+1}=b_n$ if and only if $b_n=B/A$.  

The following lemma deals with slightly more
general recursions and is a  modification of Lemma 4.2 of \cite{Fab67}, which in turn
is a summary of Lemmas 1-4 of \cite{Chung}. Like \cite{Fab67} we will refer to it as Chung's lemma.
\begin{lemma}[Chung]
Let $b_n, A_n, B_n, D_n, g_n$ be real numbers such that 
the following holds
\[ b_{n+1}=(1-A_n/g_n)b_n+B_n/g_n+D_n, \]
and with the following properties
\begin{align*}
 0 <a_0 &=\liminf_{n\to\infty} A_n\leq a_1=\limsup_{n\to\infty}A_n<\infty, \\
 B_n\to B&\geq 0, \quad\quad 0<g_n\to \infty,\quad \sum_{n} 1/g_n=\infty.
\end{align*}
\begin{romenumerate}
 \item If $D_n\leq 0$ then $\displaystyle\limsup_{n\to\infty} b_n\leq B/a_0$.
 \item If $D_n\geq 0$ then $\displaystyle\liminf_{n\to\infty} b_n\geq B/a_1$.
\end{romenumerate}
As a consequence, if $D_n\equiv 0$ and $\lim_n A_n$ exists and equals $A>0$, then $\lim_n b_n$
exists and equals $B/A$.
\end{lemma}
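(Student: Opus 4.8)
The plan is to prove the two one-sided bounds (i) and (ii) separately and then obtain the stated consequence by squeezing: if $\lim_n A_n = A > 0$ then $a_0 = a_1 = A$, so (i) gives $\limsup_n b_n \le B/A$ and (ii) gives $\liminf_n b_n \ge B/A$, forcing $\lim_n b_n = B/A$. Thus all the content lies in (i) and (ii), and I would describe (i) in detail since (ii) is entirely symmetric.

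For (i), I would fix an arbitrary $c > B/a_0$ and show $\limsup_n b_n \le c$; letting $c \downarrow B/a_0$ then finishes. Since $B \ge 0$ and $a_0 > 0$ we have $c > 0$. First I would choose $\epsilon > 0$ small enough that $a_0 - \epsilon > 0$ and $(a_0 - \epsilon)c > B + \epsilon$, and pick $N$ so large that for all $n \ge N$ one has $A_n > a_0 - \epsilon$, $B_n < B + \epsilon$, $A_n < g_n$ (so that $1 - A_n/g_n \in (0,1)$), and $B_n/g_n < c$; all of these are available from $\liminf_n A_n = a_0$, $B_n \to B$, and $g_n \to \infty$. Writing the recursion as $b_{n+1} - b_n = g_n^{-1}(B_n - A_n b_n) + D_n$ and recalling $D_n \le 0$ sets up the estimates.

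The argument then rests on a dichotomy. First, a \emph{one-step invariance}: for $n \ge N$, if $b_n \le c$ then $b_{n+1} < c$. Indeed, if $0 \le b_n \le c$ then, using $D_n \le 0$ and $1 - A_n/g_n > 0$, one gets $b_{n+1} \le (1 - A_n/g_n)c + B_n/g_n = c + g_n^{-1}(B_n - A_n c) < c$ because $B_n - A_n c < (B+\epsilon) - (a_0 - \epsilon)c < 0$; and if $b_n < 0$ then $b_{n+1} < B_n/g_n < c$. Hence as soon as the sequence reaches the region $\{b \le c\}$ it never leaves, so $\limsup_n b_n \le c$ in that case. Second, a \emph{uniform decrease above $c$}: if $b_n > c$ for some $n \ge N$, then $B_n - A_n b_n < (B + \epsilon) - (a_0 - \epsilon)c =: -\delta_0 < 0$, so $b_{n+1} - b_n \le -\delta_0/g_n$. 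Consequently, if one had $b_n > c$ for \emph{all} $n \ge N$, summing this inequality and invoking $\sum_n 1/g_n = \infty$ would give $b_n \le b_N - \delta_0 \sum_{k=N}^{n-1} 1/g_k \to -\infty$, contradicting $b_n > c$. Therefore $b_m \le c$ for some $m \ge N$, and by the invariance $\limsup_n b_n \le c$. This proves (i); part (ii) follows by the mirror-image argument, now with $D_n \ge 0$ pushing the iterate \emph{up}, a threshold $c < B/a_1$, the choice $(a_1 + \epsilon)c < B - \epsilon$, and the two estimates interchanged.

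I expect the main obstacle to be bookkeeping rather than conceptual: because $b_n$ is not assumed bounded and $D_n$ is controlled only in sign and not in magnitude, one cannot simply iterate a contraction toward $B/A$. The crux is the dichotomy above — either the iterate is eventually trapped below the threshold by the one-step invariance, or it decreases without bound, the latter being impossible precisely because in that scenario it stays bounded below by $c$. The hypothesis $\sum_n 1/g_n = \infty$ is exactly what upgrades a uniform per-step decrement $\delta_0/g_n$ into an unbounded total decrease, and is therefore the load-bearing assumption. The only genuinely delicate point is the sign bookkeeping in the one-step invariance when $b_n < 0$, together with the boundary case $B = 0$ in part (ii), where $c$ must be taken negative and the elementary estimates on $B_n - A_n b_n$ redone; neither changes the structure of the argument.
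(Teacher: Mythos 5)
Your proof is correct and follows essentially the same route as the paper's: a threshold slightly above $B/a_0$, a uniform per-step decrement $\delta_0/g_n$ above the threshold which together with $\sum_n 1/g_n=\infty$ forces the sequence below it, a one-step trapping estimate, and then letting the threshold shrink to $B/a_0$ (with the same symmetric treatment of (ii) and its $B=0$ boundary case). The only cosmetic difference is that your choice of $N$ (ensuring $B_n/g_n<c$) gives strict invariance of the region $\{b\le c\}$, whereas the paper allows re-entry above its threshold $U$ by at most $\epsilon$ per step and concludes $\limsup_n b_n\le U+\epsilon$; both yield the lemma.
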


\begin{proof}
To prove the first part, assume $D_n\leq 0$ and let $\epsilon\in(0,a_0)$ be an arbitrarily
small number. Let
$N_0$ be large enough to ensure that $n\geq N_0$ implies 
\[ \quad a_0-\epsilon\leq A_n\leq a_1+\epsilon, \quad  B_n\leq B+\epsilon \quad\mbox{and}\quad
g_n\geq\max\left\{\frac{B+\epsilon}{\epsilon},a_1+\epsilon\right\}  \] 
Now, suppose that for some $m\geq N_0$ we have $b_m\geq U=(B+2\epsilon)/(a_0-\epsilon)$.
If no such $m$ exists for any $\epsilon$ there is nothing to prove. Then $b_m$ is positive and
\begin{align*}
 b_{m+1}&=b_m-\frac{A_m}{g_m}\cdot b_m+\frac{B_m}{g_m}+D_m \\
&\leq  b_m-\frac{a_0-\epsilon}{g_m}\cdot\frac{B+2\epsilon}{a_0-\epsilon}+\frac{B+\epsilon}{g_m}+0
=b_m-\frac{\epsilon}{g_m}
\end{align*}
and hence $b_{m+1}\leq b_m-\epsilon/g_m$. If $b_{m+1},\ldots,b_{m+k-1}\geq U$, then 
\[ b_{m+k}\leq b_m-\epsilon\sum_{n=m}^{m+k-1}\frac 1{g_n}. \] Since $\sum_{n\geq m} 1/g_n$ diverges
to infinity there must be a $k$ such that $b_{m+k}< U$. 
Next, notice that if $b_n<U$, for $n\geq N_0$, we have 
$b_{n+1}\leq (1-A_n/g_n)b_n+(B+\epsilon)/g_n<U+\epsilon$.
In conclusion; if $b_n$, after $N_0$,  is above
$U$ it will decrease to a value below $U$ and then it may never again, in a single step, exceed $U$ by more
than $\epsilon$. Since $\epsilon$ is arbitrary we conclude that $\limsup_n b_n\leq B/a_0$.

To prove (ii), assume $D_n\geq 0$ and suppose first that $B>0$ and let 
$\epsilon>0$ be arbitrarily small and $\epsilon<\min\{B/2,a_0\}$.
Similar to the proof of (i), if we suppose that $b_m\leq L=(B-2\epsilon)/(a_1+\epsilon)$,
for some $m\geq N_1$, where $N_1$ is large enough to ensure that 
$n\geq N_1$ implies 
\[ a_0-\epsilon\leq A_n\leq a_1+\epsilon,\quad B_n\geq B-\epsilon, \quad\mbox{and}\quad
g_n\geq a_1+\epsilon, \]
then $b_{m+1}\geq b_m+\epsilon/g_m$ so that $b_{m+k}>L$ for some $k\geq 1$. For $n\geq N_1$,
if $b_n>L$ then 
\begin{align*}
 b_{n+1}>\left(1-\frac{a_1+\epsilon}{g_n}\right)L+\frac{B-\epsilon}{g_n}=L+\frac{\epsilon}{g_n},
\end{align*}
so that in fact all values $b_{n+k}$ stays above $L$. We conclude that
$\liminf_n b_n\geq B/a_1$.

If $B=0$ pick $\epsilon<a_0$. If $b_m<-P=-2\epsilon/(a_0-\epsilon)$, for some $m\geq n$, where $n\geq N_1$ 
($N_1$ as above), then $b_{m+1}\geq b_m+\epsilon/g_m$.
 Hence there is $k\geq1$ such that 
$b_{m+k}>-P$. Now, if $b_n>-P$ then $b_{n+1}>-P+\epsilon/g_n$.
Hence $\liminf_n b_n\geq 0$.
\end{proof}

The following corollary, specific for $g_n=n$, can be found in \cite{Fabian}. 
\begin{corr}\label{Vaclav}
 Suppose $A>0$, $h_n, b_n$ are real numbers,
\begin{equation}\label{rec2}
  b_{n+1} =(1-A/n)b_n+h_n/n. 
\end{equation}
Then 
\[ b_n\to 0\quad\mbox{if and only if}\quad \bar h_n=\frac 1n\sum_1^n h_k\to 0. \]
\end{corr}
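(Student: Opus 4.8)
The plan is to reduce both implications to a single exact identity relating $b_n$ and $\bar h_n$ through the Cesàro averages $\bar b_n=\tfrac1n\sum_1^n b_k$. First I would clear the denominator in (\ref{rec2}), writing $h_n=n(b_{n+1}-b_n)+Ab_n$, and sum this from $1$ to $n$. A summation by parts on the telescoping-type term gives
\[ \sum_{k=1}^n k(b_{k+1}-b_k)=nb_{n+1}-\sum_{k=1}^n b_k, \]
so that $\sum_{k=1}^n h_k=nb_{n+1}+(A-1)\sum_{k=1}^n b_k$. Dividing by $n$ yields the key identity
\[ \bar h_n=b_{n+1}+(A-1)\bar b_n. \]
From this identity the forward implication is immediate: if $b_n\to 0$ then $b_{n+1}\to 0$, and since the Cesàro mean of a null sequence is again null, $\bar b_n\to 0$, whence $\bar h_n\to 0$.

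For the converse I would first turn the identity into a recursion for the averages $\bar b_n$ themselves. Using $b_{n+1}=(n+1)\bar b_{n+1}-n\bar b_n$ and substituting into the identity gives, after rearranging,
\[ \bar b_{n+1}=\left(1-\frac{A}{n+1}\right)\bar b_n+\frac{\bar h_n}{n+1}. \]
This is exactly a recursion of the form treated in Chung's lemma, with $g_n=n+1\to\infty$, $\sum 1/g_n=\infty$, $D_n\equiv 0$, constant $A_n\equiv A>0$, and $B_n=\bar h_n\to 0=:B\geq 0$. The consequence of Chung's lemma then gives $\bar b_n\to B/A=0$. Feeding this back into the identity, $b_{n+1}=\bar h_n-(A-1)\bar b_n\to 0$, which closes the equivalence.

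I expect the only real work to be the summation-by-parts step producing the identity; once that exact relation is in hand, both directions become one-liners, via the Cesàro limit and via Chung's lemma respectively. The one point to check carefully is that Chung's lemma may be applied to the $\bar b_n$-recursion even though $\bar h_n$ need not be nonnegative or monotone. This is legitimate because the hypotheses of the lemma constrain only the \emph{limit} $B=\lim_n\bar h_n=0\geq 0$, and not the individual terms $B_n=\bar h_n$; the limsup bound from part (i) and the $B=0$ case of the liminf bound from part (ii) then pin $\bar b_n$ to $0$ from both sides.
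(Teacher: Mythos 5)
Your proof is correct and follows essentially the same route as the paper's: both derive the identity $\sum_{k=1}^n h_k = nb_{n+1}+(A-1)\sum_{k=1}^n b_k$, get the forward implication from the Ces\`aro property, and for the converse rewrite the identity as the recursion $\bar b_{n+1}=\left(1-\frac{A}{n+1}\right)\bar b_n+\frac{\bar h_n}{n+1}$ and invoke Chung's lemma to conclude $\bar b_n\to 0$, hence $b_{n+1}\to 0$. Your closing observation that Chung's lemma only constrains the limit $B=\lim_n \bar h_n$, not the signs of the individual $B_n$, is a worthwhile sanity check that the paper leaves implicit.
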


\begin{proof}$\phantom.$\\
''$\Longrightarrow$'' Assume $b_n\to 0$. Rewrite (\ref{rec2}) to
$n(b_{n+1}-b_n)+Ab_n=h_n$ so
\begin{equation}\label{rel}
\sum_{j=1}^n h_j=\sum_{j=1}^n j(b_{j+1}-b_j)+A\sum_{j=1}^nb_j=(A-1)\sum_{j=1}^nb_j+nb_{n+1}. 
\end{equation}
Since $b_n\to 0$ implies $\bar b_n=\frac1n\sum_1^n b_j\to 0$, this shows the necessity of
$ \bar h_n\to 0$.

\vspace{0.2cm}
\noindent ''$\Longleftarrow$'' Assume $\bar h_n\to0$.  Notice that from (\ref{rel})  we have that
$b_{n+1}=(1-A)\bar b_n+\bar h_n$ and so it suffices to show $\bar b_n\to 0$. To that end, we calculate
\begin{align*}
 \bar b_{n+1} &= 
\frac n{n+1}\bar b_n+\frac 1{n+1}b_{n+1}
=\left(1-\frac1{n+1}\right)\bar b_n+\frac{(1-A)\bar b_n+\bar h_n}{n+1}\\
&=\left(1-\frac A{n+1}\right)\bar b_n+\frac 1{n+1}\bar h_n.
\end{align*}
Chung's lemma yields $\lim_n \bar b_n=0$.
\end{proof}

\begin{lemma}\label{prodlemma}
 Suppose $n$ and $m$ are integers, $n>m\geq 1$, and that $\alpha\in(0,1)$ is fixed. Then 
\[ P_\alpha(m,n):=\prod_{k=m}^n\left(1-\frac \alpha k\right)=\left(\frac mn\right)^\alpha(1+\bigo(m^{-1})). \]
Also, if  we consider $m$ fix, then $n^\alpha P_\alpha(m,n)$ converges as
$n$ tends to infinity.
\end{lemma}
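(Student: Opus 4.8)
The plan is to pass to logarithms and reduce the product to a harmonic-type sum whose asymptotics are classical. Writing $\log P_\alpha(m,n)=\sum_{k=m}^n\log(1-\alpha/k)$, I would first record that since $\alpha\in(0,1)$ we have $0<\alpha/k\le\alpha<1$ for every $k\ge 1$, so the expansion $\log(1-\alpha/k)=-\alpha/k+r_k$ holds with a remainder controlled uniformly by $|r_k|\le C\alpha^2/k^2\le C/k^2$, where $C$ depends only on $\alpha$. This uniformity in $k$ (hence in both $m$ and $n$) is exactly what later lets all the error contributions be collected into a single $\bigo(m^{-1})$.

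Next I would estimate the two resulting sums. For the main term, monotonicity of $x\mapsto 1/x$ gives the two-sided bound $\log(n/m)\le\sum_{k=m}^n 1/k\le \log(n/m)+1/m$, so $\sum_{k=m}^n 1/k=\log(n/m)+\bigo(m^{-1})$ with an absolute implied constant. For the remainder, $\sum_{k=m}^n r_k=\bigo\big(\sum_{k\ge m}k^{-2}\big)=\bigo(m^{-1})$, again by comparison with an integral. Combining these, $\log P_\alpha(m,n)=\alpha\log(m/n)+\bigo(m^{-1})$, and exponentiating (using $e^y=1+\bigo(y)$, legitimate because $m\ge 1$ keeps the exponent bounded) yields $P_\alpha(m,n)=(m/n)^\alpha(1+\bigo(m^{-1}))$, which is the first claim.

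For the second claim, rather than reuse the first estimate (whose $\bigo(m^{-1})$ hides the $n$-dependence), I would study the sequence $a_n=n^\alpha P_\alpha(m,n)$ directly through its successive ratios. Computing $a_{n+1}/a_n=(1+1/n)^\alpha(1-\alpha/(n+1))$ and expanding each factor, $(1+1/n)^\alpha=1+\alpha/n+\bigo(n^{-2})$ and $1-\alpha/(n+1)=1-\alpha/n+\bigo(n^{-2})$, the two linear terms cancel, leaving $a_{n+1}/a_n=1+\bigo(n^{-2})$. Hence the telescoping series $\sum_n(\log a_{n+1}-\log a_n)$ is absolutely convergent, so $\log a_n$ — and therefore $a_n$ — converges as $n\to\infty$.

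I expect the only delicate point to be the bookkeeping of the error terms: ensuring the implied constants are uniform in $n$, so that the first statement is a genuine estimate valid for all $n>m$ and not merely an asymptotic one, and noticing the cancellation of the $\alpha/n$ contributions in the ratio $a_{n+1}/a_n$. It is precisely this cancellation that upgrades a divergent-looking product to a convergent sequence and thereby delivers the second statement.
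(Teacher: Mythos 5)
Your proof is correct. For the first claim you follow essentially the same route as the paper: the paper writes $1-\frac{\alpha}{k}=\exp\left\{-\frac{\alpha}{k}+\bigo(k^{-2})\right\}$ and combines $\sum_{k=m}^n \frac1k=\ln\left(\frac nm\right)+\bigo(m^{-1})$ with $\sum_{k\geq m}k^{-2}=\bigo(m^{-1})$, which is exactly your logarithm-plus-harmonic-sum argument. For the second claim, however, your route is genuinely different. The paper argues that $n^\alpha P_\alpha(m,n)$ is increasing in $n$ (a fact it asserts without proof) and bounded, by the first part, hence convergent by monotonicity. You instead expand the ratio $a_{n+1}/a_n=(1+1/n)^\alpha\left(1-\frac{\alpha}{n+1}\right)=1+\bigo(n^{-2})$, exploit the cancellation of the $\alpha/n$ terms, and deduce convergence of $\log a_n$ from the absolute convergence of the telescoping series. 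Your version buys self-containedness: the monotonicity the paper invokes is true but needs its own small verification (for instance, $g(\alpha)=\alpha\ln(1+1/n)+\ln\left(1-\frac{\alpha}{n+1}\right)$ is concave in $\alpha$ with $g(0)=g(1)=0$, hence nonnegative on $[0,1]$), and that verification really uses $\alpha\leq 1$, whereas your ratio argument works for any fixed $\alpha>0$ once the factors are positive, and in addition shows the limit is strictly positive since $\log a_n$ converges to a finite value. The paper's argument is shorter once monotonicity is granted; yours is more robust and more general.
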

\begin{proof}
The first part follows from $1-\frac \alpha k =\exp\left\{-\frac \alpha k+\bigo(k^{-2})\right\}$, 
$\sum_{k\geq n}\frac 1{k^2}$ $=\bigo(n^{-1})$ and $\sum_{k=m}^n \frac 1k$ $=\ln\left(\frac nm\right)+\bigo(m^{-1})$.
As $n^\alpha P_\alpha(m,n)$ is increasing (in $n$),
 the second fact follows from the boundedness provided by the first part and monotonicity. 
\end{proof}

\begin{lemma}\label{Venter}
 Suppose $b_n$ is a sequence of non-negative  numbers such that
\begin{equation}\label{theB} b_{n}\leq \left(1-\frac{A}{n}\right)b_{n-1}+\frac{B}{n^{1+p}}, \end{equation}
where $B>0$ and $p>A$. Then $ b_n=\bigo(n^{-A})$. 
 \end{lemma}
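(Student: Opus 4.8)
The plan is to treat (\ref{theB}) as a discrete linear recursion with a small forcing term and to strip off the homogeneous part by an integrating factor built from the products analyzed in Lemma \ref{prodlemma}. First I would fix an integer $n_0 > A$ so that $1-A/k>0$ for every $k>n_0$, and set $\pi_n=\prod_{k=n_0+1}^n(1-A/k)$ for $n>n_0$, with $\pi_{n_0}=1$. Each factor is then positive, so dividing (\ref{theB}) by $\pi_n$ preserves the inequality; using $\pi_n=(1-A/n)\pi_{n-1}$ collapses the multiplicative coefficient into a telescoping identity, since $(1-A/n)/\pi_n=1/\pi_{n-1}$. Writing $c_n=b_n/\pi_n$, the recursion becomes the plain additive inequality $c_n\leq c_{n-1}+B/(n^{1+p}\pi_n)$.

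Next I would sum this from $n_0+1$ up to $n$ to get $c_n\leq c_{n_0}+B\sum_{k=n_0+1}^n 1/(k^{1+p}\pi_k)$, so that the whole problem reduces to showing that the series on the right converges. This is precisely where the hypothesis $p>A$ is used. By Lemma \ref{prodlemma}, with the fixed lower endpoint $m=n_0+1$, one has $\pi_k=P_A(n_0+1,k)=((n_0+1)/k)^A(1+\bigo(n_0^{-1}))$, and since $n_0$ is fixed the factor $1+\bigo(n_0^{-1})$ is a constant; hence $\pi_k$ is bounded below by a constant multiple of $k^{-A}$. The summand is therefore of order $k^{A-1-p}=k^{-(1+p-A)}$, and $1+p-A>1$ gives summability by comparison with the convergent series $\sum_k k^{-(1+p-A)}$. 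Consequently $c_n$ is bounded by a finite constant $M$ uniformly in $n$.

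Finally, unwinding $b_n=c_n\pi_n\leq M\pi_n$ and invoking the upper estimate of Lemma \ref{prodlemma} once more yields $b_n\leq M((n_0+1)/n)^A(1+\bigo(n_0^{-1}))=\bigo(n^{-A})$. The assumption $b_n\geq 0$ is used only here: together with $b_n\leq M\pi_n$ it makes the bound two-sided, so that indeed $0\leq b_n=\bigo(n^{-A})$.

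I expect the only genuine subtlety to be the bookkeeping around the product $\pi_n$. One must start it past $n_0>A$ to keep every factor positive (so that dividing the inequality is legitimate), and one must apply Lemma \ref{prodlemma} with a fixed lower endpoint to extract both the lower bound on $\pi_k$ needed for convergence of the series and the upper bound needed for the closing estimate. Everything else is routine telescoping and a comparison argument.
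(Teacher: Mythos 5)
Your proof is correct and is essentially the paper's own argument in different packaging: dividing by the product $\pi_n$ and telescoping produces exactly the iterated (variation-of-constants) inequality $b_n\leq b_{n_0}\prod_{k=n_0+1}^n(1-A/k)+B\sum_{k}k^{-(1+p)}\prod_{j=k+1}^n(1-A/j)$ that the paper writes down directly, and both arguments then conclude via Lemma \ref{prodlemma} together with the convergence of $\sum_k k^{-(1+p-A)}$, which is where $p>A$ enters. The only cosmetic difference is that you bound $1/\pi_k$ from below (using the fixed endpoint $n_0$) and multiply by $\pi_n$ at the end, whereas the paper factors out $n^{-A}$ immediately; these are the same estimate.
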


\begin{rem}
  Lemma \ref{Venter} exists in a stronger form, without proof, in \cite{Ven66}.
\end{rem}

\begin{proof}
Suppose that $m>A$. Then we get, from first iterating the inequality (\ref{theB}) and then applying Lemma \ref{prodlemma}, 
\begin{align*}
  b_{n} &\leq b_m\prod_{k=m+1}^{n}(1-Ak^{-1})+B\sum_{k=m}^{n}k^{-(1+p)}\prod_{j=k+1}^{n}(1-Aj^{-1}) \\
&\leq n^{-A}\bigg[b_mm^{A}(1+\bigo(m^{-1})+B\sum_{k=m}^nk^{-(1+p-A)}(1+\bigo(k^{-1}))\bigg],
 \end{align*}
where the last sum, being convergent, has an upper bound independent of $n$. 
\end{proof}

\begin{deff}[Definition 6.1.2 of \cite{Gut}]
$\{X_n\}$ and $\{Y_n\}$ are said to  be \emph{distributionally equivalent} if
\[ \pr(X_n\neq Y_n)\to 0,\quad n\to \infty. \] 
\end{deff}

\begin{lemma}\label{distcon}
 If $\{X_n\}$ and $\{Y_n\}$ are distributionally equivalent and if 
$X_n\limd X$, then $Y_n\limd X$.
\end{lemma}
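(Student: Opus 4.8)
The plan is to reduce everything to the standard characterisation of weak convergence by bounded continuous test functions: a sequence $Z_n\limd X$ precisely when $\ex\,g(Z_n)\to\ex\,g(X)$ for every bounded continuous $g:\real\to\real$. So I would fix an arbitrary such $g$, write $M=\sup_x|g(x)|<\infty$, and aim to show $\ex\,g(Y_n)\to\ex\,g(X)$, from which $Y_n\limd X$ follows immediately.

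The key step is to compare $\ex\,g(Y_n)$ with $\ex\,g(X_n)$ by conditioning on whether the two sequences coincide. On the event $\{X_n=Y_n\}$ we have $g(X_n)=g(Y_n)$, so that difference contributes nothing; on the complement we bound the integrand crudely. Concretely,
\[
\big|\ex\,g(Y_n)-\ex\,g(X_n)\big|
=\big|\ex\!\left[(g(Y_n)-g(X_n))\,\ind{\{X_n\neq Y_n\}}\right]\big|
\leq 2M\,\pr(X_n\neq Y_n).
\]
By the hypothesis of distributional equivalence the right-hand side tends to $0$, so $\ex\,g(Y_n)-\ex\,g(X_n)\to 0$.

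Finally I would combine this with the assumption $X_n\limd X$, which gives $\ex\,g(X_n)\to\ex\,g(X)$; adding the two limits yields $\ex\,g(Y_n)\to\ex\,g(X)$. Since $g$ was an arbitrary bounded continuous function, this establishes $Y_n\limd X$. An equivalent route, if one prefers to argue directly with distribution functions, is to note that for any continuity point $x$ of the law of $X$ one has $|\pr(Y_n\leq x)-\pr(X_n\leq x)|\leq\pr(X_n\neq Y_n)\to 0$, again by splitting on $\{X_n=Y_n\}$, and then invoke $\pr(X_n\leq x)\to\pr(X\leq x)$. There is no genuine obstacle here: the only thing to be careful about is that the error introduced by replacing $X_n$ with $Y_n$ is controlled \emph{uniformly}, which is exactly what the boundedness of $g$ (or the trivial bound on probabilities) provides, so that the vanishing of $\pr(X_n\neq Y_n)$ can be applied.
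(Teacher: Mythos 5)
Your proof is correct, but it is worth noting that the paper does not actually prove this lemma at all: it simply cites Theorem 6.1.2 (ii) of \cite{Gut}, so your argument is a genuine, self-contained replacement for an external reference. Both of your routes are sound. The test-function route is clean: for bounded continuous $g$ with $M=\sup_x|g(x)|$, the splitting on $\{X_n\neq Y_n\}$ gives $|\ex\,g(Y_n)-\ex\,g(X_n)|\leq 2M\,\pr(X_n\neq Y_n)\to 0$, and combining with $\ex\,g(X_n)\to\ex\,g(X)$ finishes the proof via the standard characterisation of convergence in distribution. The distribution-function variant you sketch is equally valid and is in fact closer to the argument one finds in Gut's book: since $\{Y_n\leq x\}\,\triangle\,\{X_n\leq x\}\subseteq\{X_n\neq Y_n\}$, one gets $|\pr(Y_n\leq x)-\pr(X_n\leq x)|\leq\pr(X_n\neq Y_n)$ at every $x$, and then passes to the limit at continuity points of the law of $X$. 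The only implicit hypothesis — that $X_n$ and $Y_n$ live on a common probability space — is already built into the definition of distributional equivalence, so there is no gap. What your write-up buys over the paper's treatment is self-containedness and the observation that the error from swapping $Y_n$ for $X_n$ is controlled uniformly by $\pr(X_n\neq Y_n)$ alone, with no structural assumptions on the processes.
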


\begin{proof}
 Appears e.g.\ as Theorem 6.1.2 (ii) of \cite{Gut}.
\end{proof}

\begin{lemma}[Egoroff's theorem]
Let $(\Omega,\fa,\pr)$ be a probability space.
Let $X$ and $X_1,X_2,\ldots$ be random variables $\Omega\to \real$ and suppose that 
$X_n\to X$ a.s. Then for every $\epsilon>0$ there exists 
$B_\epsilon\in\fa$ such that $\pr(B_\epsilon^c)<\epsilon$ and
$X_n\to X$ uniformly on $B_\epsilon$.
\end{lemma}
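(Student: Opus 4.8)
The plan is to run the standard measure-theoretic argument, using the finiteness of $\pr$ in an essential way. First I would discard the null set on which $X_n\to X$ fails, so that I may assume pointwise convergence holds everywhere without loss of generality. The idea is then, for each target accuracy $1/k$, to control the ``bad'' set where some late term $X_m$ still deviates from $X$ by at least $1/k$, and to push the measure of this set below a term of a convergent series, so that the union of all the exceptional sets remains small.

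Concretely, for integers $k,n\geq 1$ I would introduce
\[ E_{n,k}=\bigcup_{m\geq n}\{\,|X_m-X|\geq 1/k\,\}. \]
For fixed $k$ these sets decrease in $n$, and their intersection over $n$ is contained in the null set where convergence fails; hence by continuity of measure from above — which is precisely where the finiteness $\pr(\Omega)=1<\infty$ enters — one obtains $\pr(E_{n,k})\to 0$ as $n\to\infty$. Thus for each $k$ I can select an index $n_k$ with $\pr(E_{n_k,k})<\epsilon/2^k$.

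I would then set $B_\epsilon^c=\bigcup_k E_{n_k,k}$, so that $B_\epsilon\in\fa$ as a countable combination of measurable sets, and by countable subadditivity $\pr(B_\epsilon^c)\leq\sum_k \epsilon/2^k=\epsilon$. It remains to verify uniform convergence on $B_\epsilon=\bigcap_k E_{n_k,k}^c$: for $\omega\in B_\epsilon$ and any $k$, membership in $E_{n_k,k}^c$ forces $|X_m(\omega)-X(\omega)|<1/k$ for all $m\geq n_k$, with $n_k$ chosen independently of $\omega$. Hence, given $\delta>0$, picking $k$ with $1/k<\delta$ yields $|X_m-X|<\delta$ simultaneously for all $\omega\in B_\epsilon$ and all $m\geq n_k$, which is exactly uniformity.

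The one point that must not be glossed over — the main (and really the only) obstacle — is the passage $\pr(E_{n,k})\to 0$ as $n\to\infty$. This rests entirely on continuity from above for a \emph{finite} measure; without finiteness the conclusion can fail, and indeed Egoroff's theorem is false on $(\real,\text{Lebesgue})$. Everything else is routine: a single selection of each $n_k$ together with the $\epsilon/2^k$ bookkeeping.
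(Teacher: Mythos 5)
Your proof is correct, but it takes a genuinely different route from the paper for the simple reason that the paper does not prove this lemma at all: it disposes of it with the single line that it is a special case of Proposition 3.1.3 of \cite{Cohn}. What you have written is the standard argument behind that citation, and it is sound: the sets $E_{n,k}=\bigcup_{m\geq n}\{|X_m-X|\geq 1/k\}$ decrease in $n$ to a subset of the null set where convergence fails, continuity from above for the \emph{finite} measure $\pr$ gives $\pr(E_{n,k})\to 0$, and the $\epsilon/2^k$ selection together with countable subadditivity and the complementation $B_\epsilon=\bigcap_k E_{n_k,k}^c$ yields uniform convergence with indices $n_k$ independent of $\omega$. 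One cosmetic point: your displayed bound reads $\pr(B_\epsilon^c)\leq\sum_k\epsilon/2^k=\epsilon$, whereas the statement demands strict inequality; this is harmless, since the termwise strict bounds $\pr(E_{n_k,k})<\epsilon/2^k$ already force $\sum_k\pr(E_{n_k,k})<\epsilon$, or one can simply select $\pr(E_{n_k,k})<\epsilon/2^{k+1}$. As for what each approach buys: the citation buys economy for a classical result, while your self-contained proof makes visible exactly where finiteness of the measure enters (your own emphasis, and correctly placed) and, more relevantly for this paper, makes explicit that the thresholds $n_k$ are deterministic. That determinism is precisely the feature of Egoroff's theorem that Corollary \ref{egocorr} exploits when it builds the adapted sequence $\hat X_n$ from $\omega$-independent indices $N_n$, so your proof dovetails with the way the lemma is actually used in the proofs of Theorem \ref{st} and Theorem \ref{genlim}.
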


\begin{proof}
 This is a special case of Proposition 3.1.3 of \cite{Cohn}. 
\end{proof}

\begin{corr}\label{egocorr}
 Let $(\Omega,\fa,\pr)$ be a probability space and let 
$\fa_n$ be a filtration. Suppose that $X_n$ is an adapted sequence such 
that $X_n\to x\in\mathbb R$ a.s. Then for every $\epsilon>0$ there exists
a set $B_\epsilon$ and an adapted sequence $\hat X_n$ such that $X_n$ equals
$\hat X_n$ on $B_\epsilon$, $\pr(B_\epsilon^c)<\epsilon$, and
 $\hat X_n$ converges uniformly to $x$.
\end{corr}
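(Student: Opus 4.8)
The plan is to apply Egoroff's theorem to produce the set $B_\epsilon$, and then to convert the resulting uniform convergence into an \emph{adapted} sequence by replacing $X_n$ with a deterministically truncated version of itself. Concretely, first I would invoke Egoroff's theorem to obtain $B_\epsilon\in\fa$ with $\pr(B_\epsilon^c)<\epsilon$ on which $X_n\to x$ uniformly. The naive attempt---simply setting $\hat X_n=X_n\ind{B_\epsilon}+x\,\ind{B_\epsilon^c}$---fails, because the set $B_\epsilon$ produced by Egoroff need not lie in any $\fa_n$, so such an $\hat X_n$ would not be adapted. The real work is therefore to modify $X_n$ using only $\fa_n$-measurable information while still matching $X_n$ on $B_\epsilon$.

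The key device is a \emph{deterministic} radius. I would set $r_n=\sup_{\omega\in B_\epsilon}|X_n(\omega)-x|$ and then $\tilde r_n=\sup_{k\geq n}r_k$, so that $\tilde r_n$ is non-increasing. Uniform convergence on $B_\epsilon$ gives $r_n\to 0$, hence $\tilde r_n\to 0$; these quantities may be $+\infty$ for small $n$, but this causes no harm since the clamp below then acts trivially there. I would then define
\[ \hat X_n = x+\sgn(X_n-x)\,\min\bigl\{\,|X_n-x|,\ \tilde r_n\,\bigr\}, \]
that is, $X_n$ clamped into the deterministic interval $[x-\tilde r_n,\,x+\tilde r_n]$.

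It then remains to verify the three claims, each of which is now routine. Adaptedness holds because $\tilde r_n$ is a constant, so $\hat X_n$ is a Borel function of the $\fa_n$-measurable variable $X_n$. On $B_\epsilon$ we have $|X_n-x|\leq r_n\leq \tilde r_n$, so the clamp is inactive and $\hat X_n=X_n$ there. Finally $|\hat X_n-x|\leq \tilde r_n$ holds everywhere on $\Omega$, and $\tilde r_n\to 0$, which is precisely uniform convergence of $\hat X_n$ to $x$.

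The main obstacle---and the whole point of the corollary, as opposed to plain Egoroff---is the adaptedness constraint: one cannot keep $X_n$ untouched on one set and overwrite it on $B_\epsilon^c$, since $B_\epsilon$ is not measurable with respect to the individual $\fa_n$. The trick that resolves this is to perform the modification through a deterministic threshold $\tilde r_n$ rather than through the non-adapted set $B_\epsilon$; monotonizing $r_n$ into $\tilde r_n$ is exactly what guarantees both the uniform convergence of the modified sequence and that the clamp never disturbs the values on $B_\epsilon$.
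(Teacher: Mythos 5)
Your proof is correct and takes essentially the same approach as the paper: both invoke Egoroff's theorem and then convert the uniform convergence on $B_\epsilon$ into \emph{deterministic} thresholds, modifying $X_n$ pointwise through an $\fa_n$-measurable rule that is guaranteed to be inactive on $B_\epsilon$ (the paper resets $\hat X_n$ to $x$ whenever $|X_n-x|\geq 1/k$ on deterministic index blocks $\{N_k+1,\ldots,N_{k+1}\}$; you clamp $X_n$ into $[x-\tilde r_n,\,x+\tilde r_n]$). Your use of the monotonized supremum radius $\tilde r_n$ is a slightly cleaner implementation of the same idea, avoiding the explicit construction of the index sequence $N_k$.
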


\begin{proof}
Given $\epsilon>0$, Egoroff's theorem gives us a set $B_\epsilon$ on
which $X_n$ converges uniformly to $x$, i.e.\ for every $\delta>0$ there is
an $N$ such that 
\[ \sup_{n\geq N}|X_n\ind{B_\epsilon}-x|<\delta. \]
Let $N_0=0$. 
For $n\geq 1$, define
 $N_{n}>N_{n-1}$ to be such that 
$\sup_{k\geq N_{n}}|X_n\ind{B_\epsilon}-x|<1/n$. Note that $N_n$
does not depend on $\omega$, and hence we can define the adapted
$\hat X_n$ via 
\[ \hat X_n(\omega)=\begin{cases}
             X_n(\omega), & \mbox{if}\; n\leq N_1 \\
             X_n(\omega), & \mbox{if}\;n\in\{N_k+1,\ldots,N_{k+1}\} \;\mbox{and}\; |X_n(\omega)-x|< 1/k, \\
             x, & \mbox{if}\;n\in\{N_k+1,\ldots,N_{k+1}\} \;\mbox{and}\; |X_n(\omega)-x|\geq 1/k 
            \end{cases} \]
Then, for every $\omega$,
$\hat X_n(\omega)$ converges uniformly to $x$, since given any $\delta>0$ we have
$\sup_{n>N_m}|\hat X_n(\omega)-x|<\delta$, if $m=\lceil 1/\delta\rceil$.
Moreover, for every $\omega\in B_\epsilon$, $X_n$ and $\hat X_n$ agree.
\end{proof}

\begin{lemma}\label{remma1}
Given a stochastic process $\{X_n\}$, suppose that for every $\epsilon>0$
we can find  a process $\{Y_{n,\epsilon}\}$ such that
\begin{romenumerate}
 \item $\pr\{\omega: X_n(\omega)\neq Y_{n,\epsilon}(\omega)$ for some $n \}<\epsilon$  and
 \item $Y_{n,\epsilon}\limd Y$, as $n\to\infty$, for all $\epsilon$.
\end{romenumerate}
Then $X_n\limd Y$. 
\end{lemma}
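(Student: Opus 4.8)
The plan is to show that convergence in distribution of $X_n$ follows from the two hypotheses by a standard approximation argument built on Lemma \ref{distcon}. The key observation is that hypothesis (i) controls, uniformly in $n$, the probability that the sample paths of $X_n$ and $Y_{n,\epsilon}$ ever differ, whereas Lemma \ref{distcon} only requires control of $\pr(X_n \neq Y_{n,\epsilon})$ for each fixed $n$. Since the event $\{X_n \neq Y_{n,\epsilon}\}$ is contained in the event $\{X_m \neq Y_{m,\epsilon}$ for some $m\}$, hypothesis (i) immediately gives $\pr(X_n \neq Y_{n,\epsilon}) \leq \pr\{\omega: X_m(\omega) \neq Y_{m,\epsilon}(\omega)$ for some $m\} < \epsilon$ for every $n$.

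First I would fix $\epsilon > 0$ and invoke (i) to obtain, for this $\epsilon$, the process $\{Y_{n,\epsilon}\}$. By the containment above, $X_n$ and $Y_{n,\epsilon}$ are \emph{not} distributionally equivalent in general (the bound is only $\epsilon$, not vanishing in $n$), so Lemma \ref{distcon} cannot be applied directly to a single choice of $\epsilon$. Instead I would work at the level of distribution functions. Let $F$ denote the distribution function of $Y$ and let $x$ be a continuity point of $F$. Using the elementary inequality
\begin{equation*}
  |\pr(X_n \leq x) - \pr(Y_{n,\epsilon} \leq x)| \leq \pr(X_n \neq Y_{n,\epsilon}) < \epsilon,
\end{equation*}
together with hypothesis (ii), which gives $\pr(Y_{n,\epsilon} \leq x) \to F(x)$ as $n \to \infty$, I would pass to the limit.

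The core estimate is then a triangle inequality: for any continuity point $x$ of $F$,
\begin{equation*}
  \limsup_{n\to\infty}|\pr(X_n \leq x) - F(x)| \leq \epsilon + \limsup_{n\to\infty}|\pr(Y_{n,\epsilon}\leq x) - F(x)| = \epsilon,
\end{equation*}
where the last equality uses (ii). Since $\epsilon > 0$ was arbitrary, the left-hand side is zero, giving $\pr(X_n \leq x) \to F(x)$ at every continuity point $x$ of $F$, which is precisely the statement $X_n \limd Y$. The only mild subtlety to check is that the same limiting distribution $Y$ appears in (ii) for every $\epsilon$, so that $F$ is well-defined independently of $\epsilon$; this is guaranteed by the hypothesis. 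I do not expect a genuine obstacle here: the whole content is the observation that the uniform-in-$n$ path bound in (i) dominates the fixed-$n$ bound needed to compare distribution functions, after which letting $\epsilon \downarrow 0$ closes the argument.
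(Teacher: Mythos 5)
Your proof is correct, but it takes a genuinely different route from the paper's. The paper argues by a diagonal selection: it chooses a sequence $\epsilon_n\to 0$ in such a way that the distribution function of $Y_{n,\epsilon_n}$ tends to that of $Y$, observes that then $\pr(X_n\neq Y_{n,\epsilon_n})<\epsilon_n\to 0$, so that $X_n$ and $Y_{n,\epsilon_n}$ are distributionally equivalent, and concludes by Lemma \ref{distcon}. You instead keep $\epsilon$ fixed, compare distribution functions directly through the elementary bound $|\pr(X_n\leq x)-\pr(Y_{n,\epsilon}\leq x)|\leq \pr(X_n\neq Y_{n,\epsilon})<\epsilon$, which hypothesis (i) makes uniform in $n$, take $\limsup_n$ using (ii) at a continuity point $x$ of $F$, and only then let $\epsilon\downarrow 0$; the structural point that makes this work is that $\limsup_n|\pr(X_n\leq x)-F(x)|$ does not depend on $\epsilon$. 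What your approach buys: it is entirely self-contained and sidesteps the one delicate step in the paper's proof, namely the existence of the diagonal sequence $\epsilon_n$, which the paper asserts without construction and which, to be made fully rigorous, requires something like a metrization of weak convergence (e.g.\ the L\'evy metric) or a diagonal extraction over a countable dense set of continuity points, since (ii) is a separate pointwise statement for each $\epsilon$. What the paper's approach buys is brevity: it delegates the interchange of limits to the already-proved Lemma \ref{distcon}. Both arguments are sound; yours is arguably the more careful of the two.
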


\begin{proof}
Choose a sequence $\epsilon_n$, tending to $0$ as $n$ tends to infinity, in such a way 
that the distribution function of $Y_{n,\epsilon_n}$ tends to that of $Y$. Then 
$\pr(X_n\neq Y_{n,\epsilon_n})<\epsilon_n\to 0$ so that
$X_n$ and $Y_{n,\epsilon_n}$ are distributionally equivalent. Since $Y_{n,\epsilon_n}\limd Y$, we
also have $X_n\limd Y$ by Lemma \ref{distcon}.
\end{proof}

We will also need the following consequence of the martingale convergence theorem.
\begin{lemma} \label{improve}
Let $(\Omega,\fa,\pr)$ be a probability space and let 
$\fa_n$ be a filtration. Suppose that $Y_n$ is an adapted sequence such 
that a.s.\ 
\begin{equation*}
 \sum_{k=1}^\infty \ex Y_k^2\leq C_1<\infty
\quad\mbox{and}\quad
 \sum_{k=1}^\infty|\ex_{k-1} Y_k|\leq C_2<\infty.
\end{equation*}
Then $\sum_{1}^n Y_k$ converges a.s.
\end{lemma}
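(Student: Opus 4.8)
The plan is to use the Doob-type decomposition of each increment into a martingale difference plus a predictable term, and then treat the two resulting partial sums separately. Write $\xi_k=Y_k-\ex_{k-1}Y_k$ and $\pi_k=\ex_{k-1}Y_k$, so that $Y_k=\xi_k+\pi_k$ and $\sum_{k=1}^n Y_k=M_n+\sum_{k=1}^n\pi_k$ with $M_n=\sum_{k=1}^n\xi_k$. I would handle the predictable sum by absolute convergence and the martingale sum by the $L^2$ martingale convergence theorem.

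First I would check that $\{M_n\}$ is a martingale with respect to $\{\fa_n\}$. Each $\xi_k$ is $\fa_k$-measurable, since $Y_k$ is adapted and $\ex_{k-1}Y_k$ is $\fa_{k-1}$-measurable, and by construction $\ex_{k-1}\xi_k=\ex_{k-1}Y_k-\ex_{k-1}Y_k=0$, so the partial sums form a martingale. Next comes the central step, the uniform $L^2$ bound. For $j<k$ the increments are orthogonal, because $\xi_j$ is $\fa_{k-1}$-measurable and $\ex(\xi_j\xi_k)=\ex(\xi_j\,\ex_{k-1}\xi_k)=0$; hence $\ex M_n^2=\sum_{k=1}^n \ex\xi_k^2$. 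Since centering only decreases the second moment,
\[ \ex\xi_k^2=\ex Y_k^2-\ex\big(\ex_{k-1}Y_k\big)^2\leq \ex Y_k^2, \]
so the first hypothesis yields $\ex M_n^2\leq \sum_{k=1}^\infty \ex Y_k^2\leq C_1$ for every $n$. Thus $\{M_n\}$ is an $L^2$-bounded martingale and converges a.s.\ (indeed in $L^2$) by the martingale convergence theorem.

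To finish, I would observe that the second hypothesis gives $\sum_{k=1}^\infty|\pi_k|=\sum_{k=1}^\infty|\ex_{k-1}Y_k|\leq C_2<\infty$ a.s., so $\sum_{k=1}^n\pi_k$ converges absolutely, hence a.s. Adding the two convergent pieces shows that $\sum_{k=1}^n Y_k=M_n+\sum_{k=1}^n\pi_k$ converges a.s., as claimed.

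The only genuinely delicate point is the uniform $L^2$ bound on $M_n$: one needs the orthogonality of the martingale differences to turn $\ex M_n^2$ into a sum of variances, and then the elementary inequality $\ex\xi_k^2\leq \ex Y_k^2$ to transfer the unconditional summability hypothesis into the bound $\sup_n\ex M_n^2\leq C_1$. Everything else — the measurability checks, the absolute convergence of the predictable part, and the final addition — is routine bookkeeping.
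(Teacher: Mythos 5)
Your proof is correct and follows essentially the same route as the paper: the same decomposition $Y_k=(Y_k-\ex_{k-1}Y_k)+\ex_{k-1}Y_k$, an $L^2$-bounded martingale part that converges a.s.\ by the martingale convergence theorem, and an a.s.\ absolutely convergent predictable part. The only difference is that you spell out the orthogonality of the increments and the inequality $\ex(Y_k-\ex_{k-1}Y_k)^2\leq \ex Y_k^2$, which the paper leaves implicit.
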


\begin{proof}
 Define the martingale
\[ S_n=\sum_{k=1}^n(Y_k-\ex_{k-1}Y_k). \]
$S_n$ is in $L_2$ since $\ex S_n^2\leq\sum_1^n\ex Y_k^2\leq C_1$ and hence a.s.\ convergent. 

The sum $T_n=\sum_1^n\ex_{k-1}Y_k$ is a.s.\ convergent, since 
it is absolutely convergent by assumption. 
Since $\sum_{1}^n Y_k$ is the sum of $S_n$ and $T_n$, it must also be a.s.\ convergent.
\end{proof}

\begin{lemma} (A version of Kronecker's lemma) \label{Kron}
 Let $a_k$ be a sequence of reals. Let $0<b_1\leq b_2\leq \ldots \leq b_n\to \infty$. 
Set 
\[ S_n=\sum_{k=1}^n a_k \quad\mbox{and}\quad T_n=\sum_{k=1}^n b_ka_k. \]
Assume $S_n\to s\in\mathbb R$. Then $T_n/b_{n+1}\to 0$.
\end{lemma}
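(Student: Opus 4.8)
The plan is to reduce the statement to a Cesàro (Toeplitz) averaging argument by means of summation by parts. First I would set $S_0=0$, so that $a_k=S_k-S_{k-1}$, and apply Abel summation to rewrite $T_n$ in terms of the $S_k$:
\[ T_n=\sum_{k=1}^n b_k(S_k-S_{k-1})=b_nS_n-\sum_{k=1}^{n-1}(b_{k+1}-b_k)S_k. \]
Dividing by $b_{n+1}$ gives
\[ \frac{T_n}{b_{n+1}}=\frac{b_n}{b_{n+1}}S_n-\frac{1}{b_{n+1}}\sum_{k=1}^{n-1}(b_{k+1}-b_k)S_k, \]
and the whole task becomes showing that these two pieces cancel as $n\to\infty$.

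Next I would exploit that the increments $b_{k+1}-b_k$ are non-negative (monotonicity of $b_n$) and telescope to $\sum_{k=1}^{n-1}(b_{k+1}-b_k)=b_n-b_1$, so that the second term is a weighted average of the convergent sequence $\{S_k\}$. Given $\epsilon>0$, fix $N$ with $|S_k-s|<\epsilon$ for $k\geq N$ and split
\[ \sum_{k=1}^{n-1}(b_{k+1}-b_k)S_k=s(b_n-b_1)+\sum_{k=1}^{N-1}(b_{k+1}-b_k)(S_k-s)+\sum_{k=N}^{n-1}(b_{k+1}-b_k)(S_k-s). \]
The middle sum is a constant (independent of $n$) that vanishes after division by $b_{n+1}\to\infty$, while the tail sum is bounded in absolute value by $\epsilon(b_n-b_N)\leq\epsilon\,b_{n+1}$.

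Finally I would combine the pieces. The contribution $s(b_n-b_1)/b_{n+1}$ pairs with $\tfrac{b_n}{b_{n+1}}S_n$, and writing
\[ \frac{b_n}{b_{n+1}}S_n-s\frac{b_n-b_1}{b_{n+1}}=\frac{b_n}{b_{n+1}}(S_n-s)+s\frac{b_1}{b_{n+1}} \]
shows both remaining terms tend to $0$, since $b_n/b_{n+1}\in(0,1]$, $S_n\to s$, and $b_{n+1}\to\infty$. Hence $\limsup_n|T_n/b_{n+1}|\leq\epsilon$ for every $\epsilon>0$, giving $T_n/b_{n+1}\to0$. The only real content is the Toeplitz-averaging fact that a weighted average of a convergent sequence with non-negative weights of (asymptotically) the right total mass converges to the limit; the sole wrinkle relative to the textbook Kronecker lemma is the normalization by $b_{n+1}$ rather than $b_n$, which I expect to be the main place to be careful, but it is handled cleanly by the bound $b_n\leq b_{n+1}$.
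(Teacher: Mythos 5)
Your proof is correct and follows essentially the same route as the paper: Abel summation to express $T_n/b_{n+1}$ in terms of the partial sums $S_k$, followed by an $\epsilon$-$N$ split of the weighted average into a negligible head, a main $s$-term that cancels against the leading term, and a tail bounded by $\epsilon$. The only difference is cosmetic (you truncate the summation by parts at $k=n-1$ with leading factor $b_n/b_{n+1}$, while the paper keeps the $k=n$ term and leads with $S_n$), and your bound $b_n/b_{n+1}\leq 1$ handles this cleanly.
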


\begin{proof}
We may rewrite $T_n/b_{n+1}$ as
\begin{align}
 \frac {T_n}{b_{n+1}} &=S_n-\frac1{b_{n+1}}\sum_{k=1}^n(b_{k+1}-b_k)S_k. \label{a1} 
\end{align}
Fix an $\epsilon>0$. By convergence of $S_n$ we know that there is an $N$ such that
$k\geq N$ implies $|S_k-s|<\epsilon$. Assume $n\geq N$ and continue 
\begin{align*}
 (\ref{a1}) &= S_n
-\underbrace{\frac{ \sum_{N}^n(b_{k+1}-b_k)s}{b_{n+1}}}_{A_n}
-\underbrace{\frac{ \sum_{N}^n(b_{k+1}-b_k)(S_k-s)}{b_{n+1}}}_{B_n}
-\underbrace{\frac{  \sum_{1}^{N-1}(b_{k+1}-b_k)S_k}{b_{n+1}}}_{C} \\
\end{align*}
$S_n$ and $A_n$ will cancel out as $n\to \infty$, since $(b_{n+1}-b_N)/b_{n+1}\to 1$.   
$B_n$ is bounded by $\epsilon(b_{n+1}-b_N)/b_{n+1}$. $C$ is a something finite
divided by $b_{n+1}$, so it tends to 0. Hence, $T_{n}/b_{n+1}\to 0$. 
\end{proof}

\newpage

\section{A central limit theorem} \label{aclt}
The first results on asymptotic normality in stochastic approximation was for the Robbins-Monro
procedure (see \cite{RM51}) in \cite{Chung}. The methods of that paper was extended in 
\cite{Der56} and \cite{Bur56} to the Kiefer-Wolfowitz procedure (see \cite{KW52}). See also
\cite{Sac58} for a different approach.  

The following (one dimensional) theorem, and its proof, is an adaptation of
(the multidimensional) Theorem 2.2 of \cite{Fabian}.
The main adaptation is to allow for general step length sequences $1/g_n$ instead
of $1/n^\alpha$. This allows us in applications to establish asymptotic normality 
for cases where the normalizing sequence is $\sqrt n$
as well as cases where it is $\sqrt{n/\ln n}$. 
\begin{theorem} \label{st}
Suppose $\{Z_n,n\geq 1\}$ is a stochastic process
adapted to a filtration $\{\fa_n,n\geq1\}$,
such that 
\begin{equation}\label{simple}
 Z_{n+1}=(1-\Gamma_{n+1}/g_n)Z_n+V_{n+1}/\sqrt {g_n},
\end{equation}
where $0<g_n\to \infty$, $\sum_n 1/g_n=\infty$,
and the $\Gamma_n,V_n\in\fa_n$ are such that a.s.\
\begin{equation}
 \ex_n V_{n+1}=\lilo(1/\sqrt{g_n}),\quad \ex_n V_{n+1}^2\to \sigma^2,\quad
 \ex_n V_{n+1}^2\leq C_V\quad{and}\quad \Gamma_n\to \Gamma \label{krav}
\end{equation}
for some (strictly) positive and deterministic $\sigma^2, C_V$ and $\Gamma$. 
If
\begin{equation}\label{kraf}
 \lim_{n\to \infty}\ex\left[V_{n+1}^2\ind{\{V_{n+1}^2\geq \epsilon g_n\}}\right]=0,\quad
\mbox{for all $\epsilon>0$,}
\end{equation}
then \[ Z_n\limd \mbox{\emph{N}} \left(0,\;\frac{\sigma^2}{2\Gamma}\right). \]
In the particular case  $g_n=n$  (\ref{kraf}) can be relaxed to
\begin{equation} \label{weakkraf}
 \lim_{n\to\infty}\frac 1n\sum_{k=1}^n\ex\left[V_{k+1}^2\ind{\{V_{k+1}^2\geq \epsilon k\}}\right]=0,\quad
\mbox{for all $\epsilon>0$,}
\end{equation}
with the same conclusion.
\end{theorem}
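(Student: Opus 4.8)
The plan is to prove asymptotic normality of $Z_n$ by first establishing an explicit (asymptotic) representation of $Z_n$ as a weighted sum of the martingale-like increments $V_{k+1}/\sqrt{g_k}$, then applying a martingale central limit theorem to that sum. Iterating the recursion~(\ref{simple}) backwards from $n$ gives
\[
 Z_{n+1}=\left(\prod_{k=1}^{n}\Bigl(1-\frac{\Gamma_{k+1}}{g_k}\Bigr)\right)Z_1
 +\sum_{k=1}^{n}\left(\prod_{j=k+1}^{n}\Bigl(1-\frac{\Gamma_{j+1}}{g_j}\Bigr)\right)\frac{V_{k+1}}{\sqrt{g_k}}.
\]
Since $\Gamma_n\to\Gamma>0$, the leading product decays to zero (in the model case $g_n=n$ at rate $n^{-\Gamma}$ by Lemma~\ref{prodlemma}), so the initial-value term is asymptotically negligible; the core object is therefore $S_n:=\sum_{k=1}^{n}w_{k,n}V_{k+1}/\sqrt{g_k}$ with weights $w_{k,n}=\prod_{j=k+1}^{n}(1-\Gamma_{j+1}/g_j)$. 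First I would control these weights: using $\log(1-x)=-x+\bigo(x^2)$ and $\sum 1/g_j=\infty$, the weight $w_{k,n}\approx\exp\{-\Gamma\sum_{j=k+1}^n 1/g_j\}$, which is the discrete analogue of the fundamental solution $e^{-\Gamma(t-s)}$ of the associated ODE.

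The second and decisive step is to compute the limiting variance of $S_n$ and verify the Lindeberg condition. Writing $V_{k+1}=(V_{k+1}-\ex_k V_{k+1})+\ex_k V_{k+1}$, the conditional-mean part contributes $\sum_k w_{k,n}\,\ex_k V_{k+1}/\sqrt{g_k}$, which by the hypothesis $\ex_k V_{k+1}=\lilo(1/\sqrt{g_k})$ is bounded by $\lilo(1)\sum_k w_{k,n}/g_k$; since $\sum_k w_{k,n}/g_k$ is a bounded weighted average (a Riemann-sum approximation to $\int_0^\infty e^{-\Gamma u}\,du=1/\Gamma$), this drift term vanishes, so $S_n$ is asymptotically a martingale array. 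For its variance I would compute
\[
 \sum_{k=1}^{n} w_{k,n}^2\,\frac{\ex_k V_{k+1}^2}{g_k}\;\longrightarrow\;\sigma^2\sum_{k} \frac{w_{k,n}^2}{g_k},
\]
using $\ex_k V_{k+1}^2\to\sigma^2$, and then evaluate the limit of $\sum_k w_{k,n}^2/g_k$. With $w_{k,n}^2\approx\exp\{-2\Gamma\sum_{j=k+1}^n 1/g_j\}$ this sum is a Riemann-sum approximation to $\int_0^\infty e^{-2\Gamma u}\,du=\tfrac1{2\Gamma}$, which is exactly how the target variance $\sigma^2/(2\Gamma)$ arises. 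The Lindeberg condition for the array $w_{k,n}V_{k+1}/\sqrt{g_k}$ follows from~(\ref{kraf}) once one notes the weights are uniformly bounded by $1$, so the truncation event $\{|w_{k,n}V_{k+1}/\sqrt{g_k}|\geq\epsilon\}$ is contained in $\{V_{k+1}^2\geq\epsilon^2 g_k\}$ (up to constants), reducing it directly to the assumed tail condition.

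The main obstacle I expect is making the passage from the deterministic ODE heuristics to rigorous probabilistic statements, since the weights $w_{k,n}$ are themselves random (through $\Gamma_{j+1}\in\fa_j$) and one must control the error in replacing $\Gamma_{j+1}$ by its limit $\Gamma$ uniformly enough that the variance sum still converges to $1/(2\Gamma)$; a uniform-convergence argument via Egoroff (Corollary~\ref{egocorr}) together with the distributional-equivalence machinery of Lemma~\ref{remma1} is the natural device, letting one work on a set where $\Gamma_n$ converges uniformly and $\ex_n V_{n+1}^2\to\sigma^2$ uniformly, establish the limit there, and then remove the exceptional set at the end. The two versions of the Lindeberg-type hypothesis,~(\ref{kraf}) in general and the Cesàro-averaged~(\ref{weakkraf}) when $g_n=n$, are handled by the corresponding two forms of the martingale CLT; the special structure $g_n=n$ is what makes the weaker averaged condition suffice, via the estimate $w_{k,n}\approx(k/n)^{\Gamma}$ from Lemma~\ref{prodlemma} that turns the weighted tail sum into precisely a Cesàro average.
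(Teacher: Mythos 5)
Your route is genuinely different from the paper's: you solve the recursion explicitly and aim a martingale (Lindeberg) CLT at the weighted sum $S_n=\sum_k w_{k,n}V_{k+1}/\sqrt{g_k}$, whereas the paper proves the core case by a characteristic-function comparison in the style of Fabian (a recursively defined $\psi_n$ is shown to track $\varphi_n=\ex e^{itZ_n}$, with Chung's lemma driving the error to zero), and only afterwards removes the simplifying assumptions. For the case of \emph{constant} $\Gamma_n\equiv\Gamma$ and centered $V$, your computations are sound: the weights are then deterministic, the array is a genuine martingale difference array, $\sum_k w_{k,n}^2/g_k\to 1/(2\Gamma)$ by the Riemann-sum argument, the conditional variance converges by a Toeplitz argument from $\ex_k V_{k+1}^2\to\sigma^2$, the Lindeberg condition reduces to (\ref{kraf}) exactly as you say, and in the case $g_n=n$ the estimate $w_{k,n}\asymp(k/n)^\Gamma$ does convert the Lindeberg sum into a weighted Ces\`aro average so that (\ref{weakkraf}) suffices (the paper achieves the same effect with Corollary \ref{Vaclav}).

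The genuine gap is in the step you flag as ``the main obstacle'' and then dispatch with Egoroff alone. The weights $w_{k,n}=\prod_{j=k+1}^n(1-\Gamma_{j+1}/g_j)$ are $\fa_{n+1}$-measurable, not $\fa_k$-measurable: they depend on the \emph{future} relative to $V_{k+1}$. Hence $\{w_{k,n}V_{k+1}/\sqrt{g_k}\}_k$ is not a martingale difference array, no martingale CLT applies to it, and even the object you call its conditional variance is not $\sum_k w_{k,n}^2\ex_k V_{k+1}^2/g_k$, because $w_{k,n}$ cannot be pulled out of $\ex_k$. Passing to an Egoroff set where $\Gamma_n\to\Gamma$ uniformly does not cure this: the weights remain random and anticipating there. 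Nor can you fix it by replacing $w_{k,n}$ with the deterministic $\bar w_{k,n}=\prod_{j=k+1}^n(1-\Gamma/g_j)$ and bounding the error term by term; with $|\Gamma_j-\Gamma|\le\delta$ one only gets $|w_{k,n}-\bar w_{k,n}|\lesssim \delta(u_n-u_k)e^{-(\Gamma-\delta)(u_n-u_k)}$ where $u_n=\sum_{j\le n}1/g_j$, and the resulting bound
\begin{equation*}
\sum_{k=1}^n |w_{k,n}-\bar w_{k,n}|\,\frac{\ex|V_{k+1}|}{\sqrt{g_k}}
\;\lesssim\;\delta \sum_{k=1}^n \frac{e^{-(\Gamma-\delta)(u_n-u_k)}(u_n - u_k)}{\sqrt{g_k}}
\end{equation*}
diverges (for $g_n=n$ it is of order $\delta\sqrt n$): the triangle inequality destroys precisely the martingale cancellation that keeps $S_n$ bounded. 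So the weight-by-weight comparison fails, and something else is needed.

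What closes the gap is a comparison at the level of the \emph{processes}, not the weights, and this is exactly the paper's Part 2: let $Z_n^{\circ}$ solve the recursion (\ref{simple}) with $\Gamma_{n+1}$ replaced by the constant $\Gamma$ and $V_{n+1}$ replaced by $V_{n+1}-\ex_n V_{n+1}$ (your martingale CLT applies to $Z_n^\circ$, since its weights are deterministic); then $\Delta_n=Z_n-Z_n^{\circ}$ satisfies
\begin{equation*}
\Delta_{n+1}=\Bigl(1-\frac{\Gamma_{n+1}}{g_n}\Bigr)\Delta_n+Z_n^{\circ}\,\frac{\Gamma-\Gamma_{n+1}}{g_n}+\frac{\ex_n V_{n+1}}{\sqrt{g_n}},
\end{equation*}
and on the Egoroff set where eventually $|\Gamma_n-\Gamma|\le\epsilon$, Chung's lemma applied twice (once to $\ex (Z_n^{\circ})^2$ to get $\limsup_n\ex|Z_n^{\circ}|\le\sqrt{C_V/2\Gamma}$, once to $\ex|\Delta_n|$) gives $\limsup_n\ex|\Delta_n|\le C\epsilon$; letting $\epsilon\downarrow 0$ yields $\Delta_n\to 0$ in $L_1$, Cram\'er--Slutsky transfers the limit to $Z_n$, and Lemma \ref{remma1} removes the exceptional set. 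Your proposal names the right auxiliary tools (Corollary \ref{egocorr}, Lemma \ref{remma1}) but is missing this recursion-level $L_1$ contraction argument, without which the anticipating weights block the martingale CLT you intend to invoke.
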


\begin{proof} 
First of all, let us show why N$(0,\sigma^2/2\Gamma)$ is a good candidate for the limiting
distribution, if such exists. To do so, let us assume that  $Z_1=0$, $\Gamma_n\equiv \Gamma$
and that $V_1,V_2,\ldots$ 
are i.i.d.\ N$(0,\sigma^2)$, i.e.\ normally
distributed with mean $0$ and variance $\sigma^2$.
 Then 
\begin{align*}
 Z_1 &=0,\\
 Z_2 &=V_2/\sqrt{g_1}, \\
 Z_3 &=(1-\Gamma/g_2)V_2/\sqrt{g_1}+V_3/\sqrt{g_2}, \\
 Z_4 &=(1-\Gamma/g_3)(1-\Gamma/g_2)V_2/\sqrt{g_1}+(1-\Gamma/g_2)V_3/\sqrt{g_2}+V_4/\sqrt{g_3},\;\; \mbox{etc.}
\end{align*}
Hence,  $Z_n$ is a linear combination of independent normally distributed random variables and hence it is also
also normally distributed. Let $b_n=\ex Z_n^2$. Squaring (\ref{simple}) 
and taking expectation  yields, since  $Z_n$ is independent of $V_{n+1}$ and
$\ex V_n\equiv 0$,
\[ b_{n+1}=(1-\Gamma/g_n)^2 b_n+\sigma^2/g_n=(1-A_n/g_n)b_n+\sigma^2/g_n, \]
where $A_n=2\Gamma-\Gamma^2/g_n\to 2\Gamma$. An application of Chung's lemma yields
$\lim_n b_n=b=\sigma^2/2\Gamma$. If $Z\in$ N$(0,b)$, then
$F_{Z_n}(x)\to F_Z(x)$, for every $x$, so that $Z_n\limd Z$.

The remainder of the proof is organized in two parts as follows; in the first part we impose stronger assumptions 
than those of the theorem
and show that the desired result is true. Then in the second part, we justify why  these stronger
assumptions make no difference to the result.

\rymd
\noindent\textbf{Part 1:} Here we assume that $Z_1=0$, $\Gamma_n\equiv \Gamma$ and $\ex_n V_{n+1}\equiv 0$.

Let
\begin{align}
\varphi_n(t) &=\ex e^{itZ_n},  \quad\mbox{i.e.\ the characteristic function of $Z_n$}, \nonumber \\
B_n &=1-\Gamma/g_n, \nonumber \\
\psi_1(t)&=1\quad\mbox{and}\quad \psi_{n+1}(t)=\psi_n(B_nt)(1-t^2\sigma^2/2g_n),\quad n\geq 1. \label{psi}
\end{align}
Now, consider the following.
\[ \mbox{\textbf{Claim:}}\quad\varphi_n(t)-\psi_n(t)\to 0,\quad\mbox{as $n\to\infty$, for all $t$.} \]

Suppose that this is true. Then, as $\psi_n$ does not depend on the actual distribution
of $V_n$, we may choose any distribution on the $V_n$:s to calculate $\varphi_n$ in order to
determine the limit of $\psi_n$. If
$V_n$ are i.i.d.\ N$(0,\sigma^2)$, then we know from the discussion above that $\varphi_n(t)\to 
\psi(t)=e^{-\frac 12t^2\sigma^2/2\Gamma}$, i.e.\ the characteristic function of a
N$(0,\sigma^2/2\Gamma)$ variable. Especially this implies that
$\lim_n \varphi_n(t)=e^{-\frac 12t^2\sigma^2/2\Gamma}$, regardless of the distribution on $\{V_n\}$,
and this is equivalent to $Z_n\limd$ N$(0,\sigma^2/2\Gamma)$.

To show that the claim is true, note that, from (\ref{simple}) and (\ref{psi}),
\begin{align}\label{lang}
 |\varphi_{n+1}(t) -\psi_{n+1}(t)|&=\big|\ex e^{itB_nZ_n+itV_{n+1}/\sqrt{g_n}}-\psi_n(B_nt)(1-t^2\sigma^2/2g_n)\big| 
\nonumber \\
&=\bigg |\ex\bigg\{ \left[e^{itB_nZ_n}-\psi_n(B_nt)\right]\left(1-t^2\sigma^2/2g_n\right) 
\nonumber \\
&\quad\quad+ e^{itB_nZ_n}\left(e^{itV_{n+1}/\sqrt{g_n}}-1+t^2\sigma^2/2g_n\right)\bigg\}\bigg| 
\nonumber \\
&\leq |1-t^2\sigma^2/2g_n|\cdot|\varphi_n(B_nt)-\psi_n(B_nt)| 
\nonumber \\
&\quad\quad+ \underbrace{\ex \big|\ex_n e^{itV_{n+1}/\sqrt{g_n}}-1+t^2\sigma^2/2g_n}_{=\zeta_n}\big|,
\end{align}
where the last step comes from smoothing and the fact that $|e^{itB_nZ_n}|\leq 1$. 
Next, we examine $\zeta_n$ (as defined in (\ref{lang})),
\begin{align*}
 \zeta_n &= \ex \left| \ex_n e^{itV_{n+1}/\sqrt{g_n}}-1+\frac{t^2}{2g_n}\ex_n V_{n+1}^2 
+\frac{t^2}{2g_n}\ex\left[\sigma^2-\ex_n V_{n+1}^2\right] \right|.
\end{align*}
The following inequality, which appears e.g.\ as  Lemma A.1.2 of \cite{Gut} where a proof can be found, will prove useful. 
For any real $v$ and integer $m\geq 0$,  
\begin{equation}\label{expineq} \left|e^{iv}-\sum_{k=0}^m \frac{(iv)^k}{k!}\right|\leq \min\left\{ \frac{2|v|^m}{m!},
\frac{|v|^{m+1}}{(m+1)!}\right\}. \end{equation}
We are going to show that $|\varphi_n(t)-\psi_n(t)|$ tends to zero for any $t$. We fix an arbitrary $T>0$ and 
consider $|t|\leq T$. Choose an $\epsilon'>0$ and put 
$\epsilon=(6\Gamma\epsilon'/C_VT^2)^2$. 
 By using the triangle inequality, thus splitting $\zeta_n$ into two parts, 
and then applying inequality 
(\ref{expineq}) with $m=2$ on the first part, again splitting into two cases  depending on whether $V_{n+1}^2$ is above 
or below $\epsilon g_n$, we get
\begin{align}
 \zeta_n&\leq \ex \left|\ex_n e^{itV_{n+1}/\sqrt{g_n}}-1+\frac{t^2}{2g_n}\ex_n V_{n+1}^2 \right|
+\frac{t^2}{2g_n}\ex\big|\sigma^2-\ex_{n}V_{n+1}^2\big|  \nonumber \\
&\leq \ex\min\left\{\frac{|tV_{n+1}|^2}{g_n},\frac{|tV_{n+1}|^3}{6g_n^{3/2}} \right\}+\frac{t^2}{2g_n}\ex
\big|\sigma^2-\ex_{n}V_{n+1}^2\big|  \nonumber \\
&\leq \ex\left[\frac{|tV_{n+1}|^2}{g_n}\ind{\{V_{n+1}^2\geq \epsilon g_n\}}\right] 
+\ex\left[\frac{|tV_{n+1}|^3}{6g_n^{3/2}}\ind{\{V_{n+1}^2<\epsilon g_n\}}\right]
+t^2\frac{\ex\big|\sigma^2-\ex_{n}V_{n+1}^2\big|}{2g_n} \nonumber \\
&\leq t^2\frac{\ex\big[V_{n+1}^2\ind{\{V_{n+1}^2\geq \epsilon g_n\}}\big]}{2g_n} 
+|t|^3\frac{\sqrt\epsilon C_V}{6g_n}+t^2\frac{\ex\big|\sigma^2-\ex_{n}V_{n+1}^2\big|}{2g_n}\nonumber  \\
&=|t|h_n(t)/g_n, \label{h}
\end{align}
where we by the last equality define a function $h_n(t)$. Two things in particular are to be noted
about this function. First, as $n\to \infty$,
\begin{equation}\label{h_et}
 h_n(t)=\frac{|t|}{2} \ex\left[V_{n+1}^2\ind{\{V_{n+1}^2\geq \epsilon g_n\}}\right]
+ t^2\sqrt\epsilon C_V/6+|t|\lilo(1),
\end{equation}
so that by assumption (\ref{kraf}) we have, for any fixed $t$
\begin{equation}\label{sqrte} \lim_{n\to\infty} h_n(t)=t^2\sqrt\epsilon C_V/6 \end{equation}
Secondly,  $h_n(t)$ is increasing in $|t|$. 
so that
\begin{equation}
\label{xkcd} h_n(s)\leq h_n(t), \quad\mbox{if $|s|\leq |t|$}.
\end{equation}

Now, let $b_n(t)=|\varphi_n(t)-\psi_n(t)|$. From (\ref{lang}) and (\ref{h}) we conclude that
\begin{equation} \label{bineq}
b_{n+1}(t)\leq |1-t^2\sigma^2/2g_n| b_n(B_nt)+|t|h_n(t)/g_n. 
 \end{equation}
We want to show that $b_n(t)$ tends to zero for any $|t|\leq T$. 
We will consider  indices $n$ larger than $N$, where $N$ is such that
$n\geq N$ implies $g_n\geq \max\{T^2\sigma^2/2,\Gamma\}$ and hence that $B_n=1-\Gamma/g_n\in(0,1)$ and, 
from (\ref{xkcd}) and (\ref{bineq}),  that
\begin{equation} \label{bineq2}
b_{n+1}(t)\leq  b_n(B_nt)+|t|h_n(T)/g_n.
 \end{equation}

First, notice that
\[ b_1(t)=|\varphi_1(t)-\psi_1(t)|=|\ex e^{itZ_1}-1|\leq \ex \min\{2,|tZ_1|\}, \]
the last inequality is a consequence of (\ref{expineq}). Hence, $b_1(t)=\bigo(|t|)$
as $t\to0$. By induction on the relation (\ref{bineq2}) we get that $b_n(t)=\bigo(|t|)$
as $t\to 0$, for any $n$. Hence, if we set
\[ \delta_N(T)= \sup_{-T\leq t\leq T}\frac{b_N(t)}{|t|},  \]
then this quantity is finite. 

Now, for any $|t|\leq T$, 
\begin{equation}\label{holds}  
b_N(t)\leq |t|\delta_N(T)\quad\mbox{and}\quad b_N(B_Nt)\leq B_N|t|\delta_N(T), 
\end{equation}
where the last inequality follows from the first and the fact that $B_N\in(0,1)$. 

Now, define, for $k\geq N$, \[ \delta_{k+1}(T)=B_k\delta_k(T)+h_k(T)/g_k. \]  Then, 
if we assume that (\ref{holds}) holds for $k$ in place of $n$,
\begin{align*}
 |t|\delta_{k+1}(T)\geq b_k(B_kt)+|t|h_k(T)/g_k\geq b_{k+1}(t),
\end{align*}
where the last inequality is due to relation (\ref{bineq2}). As a consequence, since
$B_{k+1}\in(0,1)$ we also get $b_{k+1}(B_{k+1}t)\leq|t|B_{k+1}\delta_{k+1}(T)$. By induction
$b_k(t)\leq |t|\delta_k(T)$ for all $|t|\leq T$ and $k\geq N$. 

Now, an application of Chung's lemma to $\delta_k(T)$ together with (\ref{sqrte}) reveals that 
$\limsup_k \delta_k(T)\leq \sqrt\epsilon T^2C_V/6\Gamma=\epsilon'$. As this works for every $\epsilon'$
we conclude that $\delta_n(T)$ and thus $b_n(t)$ tends to zero. 

To conclude this section, let us weaken assumption (\ref{kraf}) to (\ref{weakkraf}). Then 
instead of (\ref{sqrte}) we would have 
\begin{equation*} \lim_{n\to\infty}\frac 1n\sum_{k=1}^n h_k(t)=t^2\sqrt\epsilon\bigo(1) \end{equation*}
and we would apply Corollary \ref{Vaclav} instead of Chung's lemma in the preceding paragraph
with the same conclusion.

\rymd
\noindent\textbf{Part 2.}
Let $\hat Y_n$ denote a process that satisfies the assumptions of the theorem,
evolving via $\hat Y_n=(1-\hat\Gamma_n/g_n)\hat Y_{n-1}+V_n/\sqrt{g_n}$ 
with arbitrary $\hat Y_1$ and $\hat\Gamma\to\Gamma$ a.s. By
 Corollary \ref{egocorr}, given any 
$\delta>0$, there is an adapted and uniformly convergent sequence $\Gamma_n\to \Gamma$, 
that equals $\hat \Gamma_n$ on a set $B_\delta$ of probability at least $1-\delta$.
Hence, if we define $Y_1=\hat Y_1$ and $Y_n=(1-\Gamma_n/g_n) Y_{n-1}+V_n/\sqrt{g_n}$,
then $\hat Y_n$ and $Y_n$ also agree on $B_\delta$. 

 Below, we will show that $Y_n$ converges
to  N$(0,\sigma^2/2\Gamma)$, regardless of $\delta$. Hence, Lemma \ref{remma1}
gives us the convergence of $\hat Y_n$ to the aforementioned distribution.

Let $Z_n$ evolve according to 
\begin{equation}\label{theZ} Z_{n+1}=(1-\Gamma/g_n)Z_n+[V_{n+1}-\ex_n V_{n+1}]/\sqrt {g_n}, \end{equation}
with $Z_1=0$. Then, $Z_n$ satisfies the assumptions of Part 1 and hence
$Z_n\limd N(0,\sigma^2/2\Gamma)$. If $\Delta_n=Y_n-Z_n$ converges in
probability to 0 it follows from Cramer's theorem that $Y_n$ also
converges in distribution to N$(0,\sigma^2/2\Gamma)$. We show below 
that $\Delta_n$ converges in $L_1$, which implies convergence in distribution.

Now, $\Delta_n$ can be expressed recursively as
\begin{equation} \label{dee}
\Delta_{n+1}=\left(1-\frac{\Gamma_n}{g_n}\right)\Delta_n + Z_n\frac{\Gamma-\Gamma_n}{g_n} +\frac{\ex_n V_{n+1}}{\sqrt{g_n}}.  
\end{equation}

Fix a positive $\epsilon<\Gamma/2$. We want to show that $\limsup\ex|\Delta_n|$ is smaller than 
some constant times  $\epsilon$. 
We consider $n\geq N$ with $N$ large enough so that $g_n>\Gamma-\epsilon$ and 
$|\Gamma_n-\Gamma|<\epsilon$, the latter can be done since $\Gamma_n$ is uniformly convergent.
Hence, from (\ref{dee}), 
we may express the absolute value of $\Delta_{n+1}$ as
\begin{equation}\label{dee2}
 |\Delta_{n+1}|=\left(1-\frac{\Gamma-\epsilon}{g_n}\right)|\Delta_n| + |Z_n|\frac{\epsilon}{g_n}+\lilo(g_n^{-1}) +D_n,
\end{equation}
where $D_n\leq 0$ and the $\lilo$-term comes from assumption $|\ex_n V_{n+1}|=\lilo(1/\sqrt{g_n})$.
  We want to show that $\limsup_n \ex|\Delta_n|$ can be made arbitrarily small, so to proceed we need
a bound on $\ex|Z_n|$. To that end, we calculate from (\ref{theZ})
\begin{equation}\label{thesquare} 
Z_{n+1}^2=\left(1-\frac{2\Gamma-\Gamma/g_n}{g_n}\right) Z_n^2+\frac{\tilde V_{n+1}^2}{g_n}+
 \frac2{\sqrt{g_n}}\left(1-\frac{\Gamma}{g_n}\right) Z_n\tilde V_{n+1}, 
\end{equation}
  where $\tilde V_n=V_n-\ex_{n-1} V_n$. 

By first taking conditional expectation with respect to $\fa_n$ on (\ref{thesquare}) and then
taking expectation, we get  
\[ \ex Z_{n+1}^2=\left(1-\frac{2\Gamma-\Gamma^2/g_n}{g_n}\right)\ex Z_n^2+\frac{\ex \tilde V_{n+1}^2}{g_n},\]  
so that Chung's lemma yields $\limsup_n \ex Z_n^2\leq C_V/2\Gamma$. From the Cauchy-Schwarz inequality
we conclude that $\limsup_n \ex |Z_n|\leq \sqrt{C_V/2\Gamma}$. 

Now, If we take expectation on (\ref{dee2}) and apply Chung's lemma we get
\[ \limsup_n\ex|\Delta_n|\leq \epsilon \sqrt{C_V}/(\sqrt{2\Gamma}(\Gamma-\epsilon)).\]
 Since $\epsilon$ 
was arbitrary, we conclude $\ex|\Delta_n|\to 0$.
\end{proof}

\subsection{Applications to stochastic approximation algorithms} \label{atsaa}
In this section we discuss how Theorem \ref{st} can be applied to stochastic approximation
algorithms, as in Definition \ref{def}. Recall from section \ref{prel} that if $X_n$ is a  
stochastic approximation algorithm and 
 if $Q_n=X_n-p$, where $p$ is a stable point of $f$,  
then
\begin{equation} \label{theQ}
Q_{n+1} =\left[1-\frac{\hat\gamma_{n+1}}{n+1}\right]Q_n+\frac{\hat U_{n+1}}{n+1},
\end{equation}
where 
\begin{equation}\label{hattar}
 \hat\gamma_{n}=n\gamma_{n}h(X_{n-1})\quad\mbox{and}\quad
\hat U_{n}=n\gamma_{n}U_{n}\end{equation}
and $h(x)=-f(x)/(x-p)$ is  
nonnegative close to $p$.

Now, we may assume that $p$ is such that 
$0<\pr(X_n\to p)=\pr(Q_n\to 0)$, see Theorem 4 of \cite{lic} for necessary assumptions for this
to hold. Conditional on the event $\{Q_n\to 0\}$ we want to know how to normalize $Q_n$ to get 
a nontrivial asymptotic distribution.

To that end, let $x,y\in\real$ and define $w(n)=(n+1)^x[\ln (n+1)]^y, n\geq 1$, then by Taylor expanding
we get, for $n\geq 2$,
\begin{align*}
 \frac{w(n)}{w(n-1)}&=\left(1+\frac1n\right)^x\left(1+\frac{\ln(1+1/n)}{\ln n}\right)^y\\
&=\left(1+\frac xn+\bigo(1/n^2) \right)\left(1+\frac y{n\ln n}+\lilo(1/n^2)\right) \\
&=1+\frac xn+\frac y{n\ln n}+\bigo(1/n^2).
\end{align*}
And thus, with $Z_n=w(n)Q_n$,
\begin{align} 
 Z_{n} &= w(n-1)Q_{n-1}\left(1-\frac{\hat\gamma_{n}}{n}\right)\frac{w(n)}{w(n-1)}+\frac{w(n)}{n}\hat U_{n} \nonumber \\
&=\left(1-\frac{\hat\gamma_n-x}{n}+\frac{y}{n\ln n}+\bigo(1/n^2)\right)Z_{n-1}
+\frac{V_{n}}{n^{1-x}[\ln n]^{-y}}, \label{W_n}
\end{align}
where 
\begin{align}
 V_n &=\delta_{x,y,n} \hat U_n 
,\quad \mbox{and}  \label{veen} \\
\delta_{x,y,n} &=\left(\frac {n+1}{n}\right)^x\left(\frac{\ln (n+1)}{\ln n}\right)^y,\quad n\geq 2. \label{dElTa}
\end{align}

Assume that $\hat\gamma_n$ tends to a nonnegative real number $\hat\gamma$.
In order for (\ref{W_n}) to fit Theorem \ref{st} we need either 
\begin{romenumerate}
 \item $\hat\gamma-x>0$, $y=0$ and $x=1/2$, or 
 \item $\hat\gamma-x=0$, $y=-1/2$ and $x=1/2$. 
\end{romenumerate}

\newpage

Thus, 
\begin{itemize}
\item[(i)] when $\hat\gamma_n\to\hat\gamma>1/2$, we
consider $Z_n=\sqrt nQ_n$ which satisfies 
\begin{align*}
Z_n &=\left(1-\frac{\hat\gamma_n-1/2+\bigo(1/n)}{n}\right)Z_{n-1}
+\frac{\delta_n\hat U_n}{\sqrt n},
\end{align*}
where $\delta_n=\delta_{1/2,0,n}$, and  thus $g_n=n$.
\item[(ii)] When $\hat\gamma_n\to1/2$, we consider $Z_n=\sqrt{\frac n{\ln n}}Q_n$ which satisfies
\begin{align*}
Z_n &=
\left(1-\frac {1/2+[\hat\gamma_n-1/2+\bigo(1/n)]\ln n}{n\ln n}\right)Z_{n-1}
+\frac{\delta_n'\hat U_{n}}{\sqrt{n\ln n}},
\end{align*}
where $\delta_n'=\delta_{0.5,-0.5,n}$ and thus $g_n=n\ln n$. In this case we must verify 
that $(\hat\gamma_n-1/2)\cdot\ln n\to 0$ a.s. 
\end{itemize}
Note that the positive sequence $\delta_{x,y,n}$ satisfies 
$\sqrt{3/2}\geq \delta_{x,y,n}\to 1$, when $n\geq 2$ and $(x,y)\in\{(1/2,0),(1/2,-1/2)\}$.
Hence, from (\ref{veen}), (\ref{hattar}) and Definition \ref{def},
\begin{align}
 |\ex_n V_{n+1}| &= \delta_{x,y,n}n|\ex_n \gamma_{n+1}U_{n+1}|\leq \frac{\sqrt{3/2}K_e}{n},\quad\mbox{and} \\
 |V_{n+1}| &\leq \sqrt{3/2}c_uK_u, 
\end{align}
and thus $V_n=\delta_{x,y,n}\gamma_nU_n$ satisfies the first and third 
condition listed in (\ref{krav}). 

In application to a specific stochastic approximation algorithm we must make sure
that $\ex_n V_{n+1}^2$ tends to some (strictly) positive constant, that 
$\hat\gamma_n\to \hat\gamma\geq 1/2$ and, if $\hat\gamma=1/2$, that $\ln n\cdot(\hat\gamma_n-1/2)\to 0$ a.s.

\subsection{Applications to generalized P\'olya urns} \label{genpol} 
In this section we apply Theorem \ref{st} to the  generalized Pólya urn model described in the introduction
and defined by the replacement matrix (\ref{repmat}).
Asymptotic normality 
(as well as general limit theorems) is well studied for generalized Pólya urn models (see e.g.\ 
\cite{Fre65}, \cite{BP85}, \cite{Gou93},\cite{Smy96}, \cite{Jan04}, \cite{Jan06}) 
so we do not expect these results to be new.

Recall that the fraction of white balls $X_n$ in this model, when $\min\{a+b,c+d\}>0$, is a stochastic approximation
algorithm with drift function 
$f(x)=\alpha x^2+\beta x+c$, where 
$\alpha=c+d-a-b$ and $\beta=a-2c-d$. The error function $\e(Z_n)=\ex_n U_{n+1}^2$ is given by 
$\e(x)=x(1-x)[a-c+\alpha x]^2$. The total number of balls at time $n$ is denoted $T_n$.

Below, we give calculate explicitly the parameters of the limiting normal distribution 
in the case of $\alpha=0$, and give a brief discussion on the case $\alpha\neq 0$.

\subsubsection{The case $\alpha=0$} 
$\alpha$ is zero exactly when $a+b=c+d=:T$, which we assume positive. This has the added benefit that 
\[ \gamma_{n}=\frac1{T_0+nT}\]  are
deterministic with $n\gamma_n\to \gamma= 1/T$ and that $\e(x)=x(1-x)(a-c)^2$, i.e.\ the variance of 
$U_n$ never vanishes, except at the boundary, as long as $a\neq c$ (which would imply also
that $b=d$ which makes the process completely deterministic). Hence, we must demand $a\neq c$. 

Note that we have
\[ -h(x):=f'(x)=\beta=a-2c-d=-b-c\leq 0, \] so as long as $c+b\neq 0$, any zero of $f$ is 
stable. We are looking for a $p\in(0,1)$ such that $f(p)=0$. Since $p=c/(c+b)$ we must demand 
$c>0$ and $b>0$.  
Now, $h(x)=c+b$ so with $\hat\gamma=\gamma h(p)$ we get
\[ \hat\gamma=\frac{b+c}{a+b} \quad\mbox{and}\quad \hat\gamma-\frac12=\frac12\cdot\frac{b+2c-a}{a+b} \]
and thus $\hat\gamma> 1/2$ if $a< b+2c$ and $\hat\gamma=1/2$ if $a=b+2c$. 

The $\sigma^2$ of Theorem \ref{st} corresponds to 
\[ \lim_n \ex_{n-1} [(n\gamma_nU_n)^2]= \frac{\e(p)}{T^2}=\frac{bc(a-c)^2}{(a+b)^2(b+c)^2}. \]

So,
if $c\neq a<b+2c$, and $b,c>0$, then 
\begin{equation} \label{min}
\sqrt n\left(X_n-\frac{c}{c+b}\right) \limd \text N\left(0,\frac{\e(p)/T^2}{2(\hat\gamma-1/2)}\right)\,\mbox{i.e.}\;
\text N\left(0,\frac{bc(a-c)^2}{(a+b)(c+b)^2(b+2c-a)}\right).
\end{equation}

If $a=b+2c$ then $\hat\gamma=1/2$ and we first need to verify that $\hat\gamma_n-1/2$ tends to zero
faster than $\ln n$. That this is true is shown by direct calculation;
\begin{align}  
n\gamma_nh(X_{n-1})-\frac 12 &=n\frac{b+c}{T_0+n2(b+c))}-\frac12 \label{gjort1} \\
&=\frac 1n\cdot\frac{-T_0}{2(b+c)(2+T_0/n(b+c))}=\lilo(1/\ln n) \label{gjort2}.
\end{align}
The  variance in the central limit theorem is
\[ \frac{\e(p)T^{-2}}{2\cdot1/2}=\frac{bc(a-c)^2}{(a+b)^2(c+b)^2}=\frac{bc}{4(b+c)^2}, \]
since $a=b+2c$.
Thus,
\begin{equation} \label{lik}
\sqrt{\frac{n}{\ln n}}\left(X_n-\frac{c}{c+b}\right)\limd \text N\left(0,\frac{bc}{4(b+c)^2}\right),
\end{equation}
when $b,c>0$ and $a=b+2c$.

\begin{exempel}[Friedman's urn] \rm The urn process with replacement matrix
\begin{align*}  
\left( \begin{array}{c c}
a & b \\ b & a 
       \end{array} \right),
 \end{align*}
where $b>0$, is commonly known as Friedman's urn. The fraction of white balls $X_n$ is
a stochastic approximation 
algorithm with drift function $f(x)=-2b(x-1/2)$. It is straightforward to verify from 
(\ref{min}) and (\ref{lik}) that
\begin{itemize}
 \item[(i)] $3b>a$ implies \[ \sqrt n(X_n-1/2)\limd  \text N\left(0,\frac{(a-b)^2}{4(a+b)(3b-a)}\right),\quad\mbox{and} \]

\item[(ii)] $3b=a$ (when $a>0$) implies 
\[ \sqrt{\frac{n}{\ln n}}(X_n-1/2)\limd N(0,1/16), \]
respectively.
\end{itemize}
\end{exempel}

\subsubsection{A remark on the case $\alpha\neq 0$} \label{alfaejnoll}
To write down the general formula is rather cumbersome, so lets look  at just one example
before making a general comment. 
\begin{exempel} [Toy example] \rm The fraction of white balls $X_n$ evolving in accordance with the replacement
matrix
 \begin{align*}  
\left( \begin{array}{c c}
4 & 5 \\ 3 & 2 
       \end{array} \right),
 \end{align*}
has a drift function $f(x)=-4x^2-4x+3=-4(x+3/2)(x-1/2)$, and thus 
the stable zero is $1/2$ and $h(1/2)=8$. Then
$n\gamma_n$ converge to $[(4+5)\frac 12+(3+2)\frac 12]^{-1}=1/7$ and
thus $\hat\gamma=8/7>1/2$. Since, $\e(1/2)=\frac14(1-4\frac 12)^2=1/4$
we know
\[ \sqrt n(X_n-1/2)\limd \mathrm{N} \left(0,\frac{\e(1/2)7^{-2}}{2(8/7-1/2)}\right),\;\mbox{i.e.}\; \mathrm{N}(0,1/252). \]
\end{exempel}
For any given replacement matrix that has $\hat\gamma>1/2$ and non-vanishing error
terms at $p$ (see Remark \ref{singular} for an exception)
it is clear that a central limit theorem applies and
the parameters are not too difficult to calculate. When  $\hat\gamma=1/2$ it is not a priori clear that
$\hat\gamma_n-1/2$ is $\lilo(1/\ln n)$, which must hold for the central limit
theorem to apply. When the step lengths are deterministic (the case $\alpha=0$) this followed from
the calculation (\ref{gjort2}). When they are not, this fact will follow
 from the assertion $\hat\gamma_n-\hat\gamma=\bigo(|X_n-p|+1/n)$, in section 
\ref{limappgenpol}, and Lemma \ref{fasterthanln}, both below, since taken together
these facts imply that $\hat\gamma_n-1/2=\lilo(n^{-\beta})$ for any $\beta<1/2$.

\newpage

\section{A limit theorem} \label{alt}
We present here a limit theorem for when the parameter $\hat\gamma_n$, defined
by (\ref{hattar}) and Definition \ref{def}, tends to 
a limit in $(0,1/2)$.  
We recall that $X_n$ is a stochastic approximation algorithm 
according to Definition \ref{def}, that $p$ is a stable zero of the drift function,
$Q_n=X_n-p$ and that it is convenient to write the recursive evolution of 
$Q_n$ in the form of (\ref{theQ}). 

A corresponding limit theorem for the Robbins-Monro algorithm can be found in \cite{MP73}, 
and we follow their approach.  

\begin{theorem} \label{genlim}
Suppose $X_n$ is a stochastic approximation algorithm, according to Definition \ref{def}, 
with drift function $f$ having a stable point $p$. Assume that 
$\{ X_n\to p\}$ and that for some $\alpha\in(0,1/2)$ we have a.s.\ 
\begin{equation}\label{gammas} \hat\gamma_{n}-\alpha=\bigo(|X_{n}-p|+1/n),\end{equation}
where $\hat\gamma_n=-n\gamma_nf(X_{n-1})/(X_{n-1}-p)$. 

Then $n^\alpha (X_n-p)$ converges a.s.\ to a random variable. 
\end{theorem}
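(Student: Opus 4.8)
The plan is to normalise $Q_n=X_n-p$ by the \emph{deterministic} product $a_n=\prod_{k=2}^n(1-\alpha/k)$ and to read off convergence of $n^\alpha Q_n$ from an almost surely convergent series. By Lemma \ref{prodlemma}, $n^\alpha a_n$ converges to a positive constant, so $a_n$ is of order $n^{-\alpha}$ and $1/a_n$ is of order $n^\alpha$, with deterministic constants. Splitting the coefficient in (\ref{theQ}) as $\hat\gamma_{n+1}=\alpha+(\hat\gamma_{n+1}-\alpha)$, I would write the recursion as $Q_{n+1}=(1-\alpha/(n+1))Q_n+\hat U_{n+1}/(n+1)+E_{n+1}$, where $E_{n+1}=-(\hat\gamma_{n+1}-\alpha)Q_n/(n+1)$ collects the deviation of the drift coefficient from $\alpha$. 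Dividing by $a_{n+1}$ makes the homogeneous part telescope, giving
\begin{equation*}
\frac{Q_n}{a_n}=\frac{Q_1}{a_1}+\sum_{k=2}^n\frac{\hat U_k}{k\,a_k}+\sum_{k=2}^n\frac{E_k}{a_k}.
\end{equation*}
Since $n^\alpha a_n$ converges and $Z_n=n^\alpha Q_n=(n^\alpha a_n)(Q_n/a_n)$, it suffices to show that both series on the right converge almost surely.

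The first (martingale-type) series is handled by Lemma \ref{improve}. With $Y_k=\hat U_k/(k\,a_k)$, the deterministic bound $|\hat U_k|=|k\gamma_kU_k|\le c_uK_u$ together with $a_k$ of order $k^{-\alpha}$ gives $\ex Y_k^2=\bigo(k^{2\alpha-2})$, which is summable precisely because $\alpha<1/2$; and condition (iv) of Definition \ref{def} gives $|\ex_{k-1}Y_k|=|\ex_{k-1}\hat U_k|/(k\,a_k)=\bigo(k^{\alpha-2})$, also summable. It is essential here that $a_k$ is deterministic, so that it factors cleanly out of the conditional expectation and the second-moment estimate is a genuine (non-random) bound; this is why I normalise by $a_n$ rather than by the random product $\prod(1-\hat\gamma_k/k)$. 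Lemma \ref{improve} then yields almost sure convergence of $\sum_k Y_k$.

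The second series is the main obstacle. By hypothesis (\ref{gammas}), $\hat\gamma_k-\alpha=\bigo(|Q_k|+1/k)$, so $|E_k/a_k|=\bigo\!\left(k^{\alpha-1}|Q_{k-1}|(|Q_k|+1/k)\right)$; the $1/k$ contribution is $\bigo(k^{\alpha-2})$ and summable, but the genuinely quadratic piece $k^{\alpha-1}|Q_k||Q_{k-1}|$ is summable only if $Q_k$ already decays like $\bigo(k^{-\beta})$ with $\beta>\alpha/2$ — the crude bound $|Q_k|\le1$ is useless here. So the heart of the matter is to bootstrap an a priori decay rate out of the purely almost-sure hypotheses. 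I would obtain this in $L^2$ after localising: fix small $\eta>0$ and $N$, restrict to $\Omega_{N,\eta}=\{\sup_{n\ge N}|Q_n|\le\eta\}$ by stopping at $\sigma=\inf\{n\ge N:|Q_n|>\eta\}$ (so $\sigma=\infty$ on $\Omega_{N,\eta}$, and these events exhaust $\{X_n\to p\}$ as $N\to\infty$), using Corollary \ref{egocorr} to keep the construction adapted. On $\Omega_{N,\eta}$ the deviation $\hat\gamma_n-\alpha$ is uniformly $\bigo(\eta+1/n)$. Squaring (\ref{theQ}) for the stopped process and taking conditional then ordinary expectation, the cross term is $\bigo(|Q_n|/n^2)$ by condition (iv), the noise term is $\bigo(1/n^2)$ since $\hat U$ is bounded, and $q_n=\ex[Q_{n\wedge\sigma}^2]$ obeys $q_{n+1}\le(1-(2\alpha-C\eta)/(n+1))q_n+\bigo(1/n^2)$. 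Because $\alpha<1/2$ we have $2\alpha-C\eta<1$ for $\eta$ small, so Lemma \ref{Venter} gives $q_n=\bigo(n^{-(2\alpha-C\eta)})$, an exponent exceeding $\alpha$ once $\eta$ is small enough. Then, by Cauchy--Schwarz, $\ex\sum_k k^{\alpha-1}|Q_k||Q_{k-1}|\ind{\{\sigma=\infty\}}\le\sum_k k^{\alpha-1}\sqrt{q_kq_{k-1}}<\infty$, so the second series converges absolutely almost surely on $\Omega_{N,\eta}$.

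Combining the two convergent series shows $Q_n/a_n$, and hence $n^\alpha Q_n$, converges almost surely on each $\Omega_{N,\eta}$; letting $N\to\infty$ (then $\eta\to0$) gives the conclusion on $\{X_n\to p\}$. The two places I expect to fight with are, first, the measurability bookkeeping: $\hat\gamma_{n+1}$ and $\hat U_{n+1}$ are $\fa_{n+1}$- rather than $\fa_n$-measurable, which must be tracked carefully when taking $\ex_n$ of the squared recursion so that the cross term really is $\bigo(|Q_n|/n^2)$; and second, making the stopping/localisation argument clean enough that the $L^2$ rate for the stopped process transfers to the pathwise summability statement on $\Omega_{N,\eta}$. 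The quadratic error term and the need to manufacture a decay rate from the almost-sure hypotheses is the genuine difficulty — the remainder is the product transformation plus the two lemmas already in hand.
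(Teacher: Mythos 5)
Your overall strategy is in fact the paper's own: your deterministic normalizer $a_n$ is exactly $P_\alpha(2,n)$ from Lemma \ref{prodlemma}, your telescoped identity is equation (\ref{qq}) re-factored, your treatment of $\sum_k \hat U_k/(k a_k)$ via Lemma \ref{improve} with the bounds $\bigo(k^{2\alpha-2})$ and $\bigo(k^{\alpha-2})$ is the paper's treatment of $G_n$ (and is, if anything, slightly cleaner, since keeping $a_n$ factored out spares you the comparison of $l_{k,n}$ with its limit $l_k$ that the paper has to make), and the bootstrap of an $L^2$ rate via Lemma \ref{Venter} followed by a Cauchy--Schwarz/Beppo--Levi argument for the quadratic error series is precisely the paper's plan. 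The difference is in how the purely almost-sure hypothesis (\ref{gammas}) is turned into uniform, deterministic bounds, and there your proposal has a genuine flaw.

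The step that fails is the claim that $q_n=\ex[Q_{n\wedge\sigma}^2]$ obeys $q_{n+1}\leq(1-(2\alpha-C\eta)/(n+1))q_n+\bigo(1/n^2)$. Decompose $q_{n+1}$ over $\{\sigma\leq n\}$ and $\{\sigma>n\}$: on the frozen part the stopped process does not contract at all, so the claimed inequality would force $\frac{2\alpha-C\eta}{n+1}\,\ex\big[Q_\sigma^2\ind{\{\sigma\leq n\}}\big]=\bigo(1/n^2)$, which is false whenever $\pr(\sigma<\infty)>0$, since $|Q_\sigma|>\eta$ there. Indeed $q_n\geq\ex\big[Q_\sigma^2\ind{\{\sigma<\infty\}}\big]$ for large $n$, so $q_n$ does not decay and Lemma \ref{Venter} gives nothing. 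The repair is to work with the killed moment $\tilde q_n=\ex\big[Q_n^2\ind{\{\sigma>n\}}\big]$ instead: since $\ind{\{\sigma>n+1\}}\leq\ind{\{\sigma>n\}}\in\fa_n$, one gets $\tilde q_{n+1}\leq(1-(2\alpha-C\eta)/(n+1))\tilde q_n+\bigo(1/n^2)$, Lemma \ref{Venter} applies, and your Cauchy--Schwarz step survives because $\ind{\{\sigma=\infty\}}\leq\ind{\{\sigma>k\}}\ind{\{\sigma>k-1\}}$. A second, related soft spot: the implied constant in (\ref{gammas}) is a random variable $L$, so restricting to $\Omega_{N,\eta}$ does \emph{not} by itself give the deterministic coefficient bound you need inside $\ex_n$; truncating on $\{L\leq C\}$ is not adapted, so it cannot simply be intersected in. This is exactly why the paper, instead of stopping, replaces $\hat\gamma_n$ by an adapted modification (Egoroff via Corollary \ref{egocorr}, then the pointwise adapted truncation $L_n\leq C_\delta$) and runs an auxiliary process $Q_n^*$ satisfying the recursion globally with uniform bounds; that single device delivers both the uniformity and, incidentally, eliminates the freezing problem, after which agreement with $Q_n$ on a set of probability $1-2\delta$ finishes the proof. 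With the killed-moment fix and the paper's modification device in place of your stopping construction, your argument closes.
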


\begin{rem} \label{as_gammas}
Recall that $h(x)=-f(x)/(x-p)$. In applications of Theorem \ref{genlim},
one can try to verify assumption (\ref{gammas}) by verifying $h(X_n)-h(p)$ 
and $n\gamma_n-\gamma$ to be $\bigo(|X_{n}-p|+1/n)$ separately. Notice that
$h(x)-h(p)=\bigo(|x-p|)$ if e.g.\ $f$ is twice differentiable at $p$, which is
the case for the generalized P\'olya urn process described in the introduction. 
That $n\gamma_n-\gamma=\bigo(|X_n-p|+1/n)$ for that particular process 
is shown in section \ref{limappgenpol}. 
\end{rem}

\begin{proof}
By first rewriting  (\ref{theQ}) as
\[ Q_n=\left(1-\frac\alpha n\right)Q_{n-1}+\frac{\hat U_n}{n}+\frac{(\alpha-\hat\gamma_n)Q_{n-1}}{n} \]
and then iterating, we get
\begin{equation} \label{qq}
n^\alpha Q_n =  Q_0n^\alpha P_\alpha(1,n)+G_n+F_n,
\end{equation}
where $P_\alpha(m,n) =\prod_{k=m}^n\left(1-\frac\alpha n\right)$ (which equals 1 if $m>n$),
\begin{align*}
G_n =\sum_{k=1}^n\frac 1k\hat U_k n^\alpha P_\alpha(k+1,n) \quad\mbox{and}\quad
F_n =\sum_{k=1}^n\frac 1k(\alpha-\hat\gamma_k)Q_{k-1}n^\alpha P_\alpha(k+1,n).
\end{align*}
Recall that Lemma \ref{prodlemma} states that $P_\alpha(m,n)$ is $(m/n)^\alpha(1+\bigo(1/m))$ and
that $n^\alpha P_\alpha(m,n)$ is convergent, as $n\to \infty$.
Thus, the first term on the right hand side of (\ref{qq}) is convergent. 
The second term $G_n$, equals, by the definition of
$\hat U_k$ in (\ref{hattar}) and Definition \ref{def},
\[ G_n=\sum_{k=1}^n k^\alpha\gamma_k U_kl_{k,n}, \] where 
\begin{equation} \label{lkn}
l_{k,n}=(n/k)^\alpha P_\alpha(k+1,n). 
\end{equation} 
The limit 
\begin{equation} \label{lk}
l_k=\lim_n l_{k,n} 
\end{equation} 
exists and is uniformly bounded by Lemma \ref{prodlemma}. The quantity
$G_n^*=\sum_1^n k^\alpha\gamma_k U_kl_k$ will be a.s.\ convergent by
Lemma \ref{improve}, since $l_k$ is bounded, $|k^\alpha\gamma_kU_k|^2=\bigo(k^{2-2\alpha})$,
$|\ex_{k-1}[k^\alpha\gamma_kU_k]|=\bigo(k^{\alpha-2})$ and $\alpha<1/2$.

By Lemma \ref{prodlemma} it is easy to see that $l_{k}/l_{k,n}=1+\bigo(n^{-1})$. Since
$l_{k,n}$ also is bounded, it follows that $l_k-l_{k,n}=\bigo(n^{-1})$. Hence, there is
some constant $C$ such that 
\[ |G_n^*-G_n|\leq \sum_{k=1}^n |k^\alpha\gamma_k U_k(l_k-l_{k,n})| \leq C\frac 1n\sum_{k=1}^n k^{\alpha-1}, \]
which tends to 0, as $n\to\infty$, and thus implies the a.s.\ convergence of $G_n$. 

By squaring relation (\ref{theQ}), taking expectations, using the bounds of $\gamma_k$ and
$U_k$, as well as smoothing, we get
\begin{align} &\ex  Q_k^2=
 \ex \left[\left(1-\frac{2\hat\gamma_k}{k}+\frac{\hat\gamma_k^2}{k^2}\right) Q_{k-1}^2 \right]
 +\frac{\ex [\hat U_k^2]}{k^2} +\frac {2\ex \left[(1-\hat\gamma_k/k)Q_{k-1}\hat U_k\right]}k \nonumber \\
&\leq \ex\left[\left(1-\frac{2\hat\gamma_k-\hat\gamma_k^2/k}{k}\right) Q_{k-1}^2\right]
 +\frac{c_u^2K_u^2}{k^2} +\frac 2k\ex\left\{|Q_{k-1}|\left|\ex_{k-1} \left[\hat U_k-
  \hat\gamma_k\hat U_k/k)\right]\right|\right\}. \label{tobc}
\end{align}

Next, make two \emph{extra} assumptions. First that $\hat\gamma_n\to \alpha$ uniformly. Then, given any $\epsilon\in(0,\alpha)$
we can find a $N_\epsilon$ such that $k\geq N_\epsilon$ implies $2\hat\gamma_k-\hat\gamma_k^2/k\geq 2\alpha-\epsilon$
Second,  make  the assumption that $\hat\gamma_k-\alpha=\bigo(|Q_k|+1/k)$ more restrictive by
assuming that  
\begin{equation}\label{Lk} 
 \hat\gamma_k-\alpha=L_k(|Q_k|+1/k)
\end{equation}
 for a \emph{bounded} (stochastic) sequence $L_k$.

So, assuming $k\geq N_\epsilon$ and noting that $Q_{k-1}h(X_{k-1})=-f(X_{k-1})$, we can continue 
\begin{align*}
 (\ref{tobc}) &\leq \left(1-\frac{2\alpha-\epsilon}{k}\right)\ex Q_{k-1}^2+
\frac{2}{k^2}\left(c_u^2K_u^2+K_e+c_u^2K_uK_f \right),
\end{align*}
which, by Lemma \ref{Venter}, implies that $\ex Q_k^2=\bigo(k^{-2\alpha+\epsilon})$. 

Now, we are prepared for the third and last term of the right hand side of (\ref{qq}). 
Below,
we show that $F_n$ converges. Notice, however, that this also works for
$\alpha=1/2$,  a fact we exploit in the proof of Lemma \ref{fasterthanln} below.
\begin{align} 
F_n &=\sum_{k=1}^n\frac{1}{k^{1-\alpha}}Q_{k-1}(\alpha-\hat\gamma_k)(n/k)^\alpha P_\alpha(k+1,n)  \nonumber \\
&=-\sum_{k=1}^n \frac{1}{k^{1-\alpha}}Q_{k-1}L_k(|Q_k|+1/k)l_{k,n} \nonumber \\
&=\sum_{k=1}^n \frac{1}{k^{1-\alpha}}Q_{k-1}Q_kL_k'l_{k,n} - \sum_{k=1}^n \frac{1}{k^{2-\alpha}}Q_{k-1}L_kl_{k,n}, \label{secsum}
\end{align}
where $l_{k,n}$ and $L_{k}$ are  defined in (\ref{lkn}) and (\ref{Lk}), respectively,
and where $L_k'=-\sgn(Q_k)L_k$. Similarly to how we
showed convergence of $G_n$, we compare the  second sum in (\ref{secsum}) with
$\sum_1^nQ_{k-1} L_kl_k/k^{2-\alpha}$, with $l_k$ defined in (\ref{lk}). The infinite sum
is absolutely convergent, since $|L_k|$, $|l_k|$ and $|Q_k|$ are bounded and hence the sequence
of partial sums converges. The absolute difference between this sum and the second sum in (\ref{secsum})
is bounded by some constant times $\frac 1n\sum_1^n 1/k^{2-\alpha}$ which tends to zero. Thus,
the second sum  in (\ref{secsum}) converges.

 The first sum in (\ref{secsum}), is by relation (\ref{theQ}), equal to
\begin{align}
 \sum_{k=1}^n \frac{1}{k^{1-\alpha}}L_k'l_{k,n}\left(1-\frac{\gamma_{k}}{k}\right)Q_{k-1}^2+
\sum_{k=1}^n \frac{1}{k^{2-\alpha}}L_k'l_{k,n}\hat U_kQ_{k-1}, \label{secsum2}
\end{align}
where we once again compare the second sum with that we get when replacing $l_{k,n}$ with $l_k$. This
altered sum will be absolutely convergent, since $|L_{k}'|$, $|Q_k|$ and $|\hat U_k|$ are bounded. 
The absolute difference between the altered sum and the original is some constant times
$\frac1n\sum_1^n 1/k^{2-\alpha}$, which tends to zero.

Next, we compare
$T_n:=\sum_1^n |(1-\hat\gamma_k/k)L_k'l_{k}Q_{k-1}^2/k^{1-\alpha}|$  with the first sum in (\ref{secsum2}). 
Since the summands are positive $T_n$ is 
increasing and thus  $T:=\lim_n T_n$ exists, although it may be $\infty$. By Beppo-Levi's theorem, 
\[ \ex T=\sum_{k=1}^\infty\frac{\ex[|L_k'l_k||1-\hat\gamma_k/k|Q_{k-1}^2]}{k^{1-\alpha}}, \]
which must be finite, since $|L_k'|$ and $l_k$ are bounded, $\ex Q_k^2=\bigo(1/k^{2\alpha-\epsilon})$
and $1-\hat\gamma_k/k\leq 1-(2\alpha-\epsilon)/n<1$ if $k>N_\epsilon$.
Then $T<\infty$ a.s.\ since $\pr(T=\infty)>0$ would imply $\ex T=\infty$. 
Yet again, the absolute difference between the first sum in (\ref{secsum2}) and $T_n$ is
some constant times $\frac 1n\sum_1^n1/k^{1-\alpha}$ which tends to zero.

So, $F_n$ is convergent  under the  extra assumptions that 
$\hat\gamma_n\to\alpha$ uniformly and that (\ref{Lk}) is valid for
a bounded $L_k$, neither which are assumptions of the theorem. However, we
know that, given any $\delta>0$, by Corollary \ref{egocorr} there exists an adapted and uniformly convergent sequence
$\hat\gamma_n^{**}\to \alpha$ such that the sequences $\hat\gamma_n$ and $\hat\gamma_n^{**}$ agree on a set 
of probability at least $1-\delta$. 

Also, we have assumed that $\hat\gamma_n-\alpha=\bigo(|Q_n|+1/n)$ a.s.\ so 
there is a random variable $L$ such that 
\[ L_n=\frac{\hat\gamma_n^*-\alpha}{|Q_n|+1/n}, \]
has the property $|L_n|\leq L$. Since $L<\infty$ a.s.\ there must be
a $C_\delta$ such that $\pr(L>C_\delta)<\delta$. Define the adapted $\hat\gamma_n^{*}$ by
\[ \hat\gamma_n^{*}(\omega)=
 \begin{cases}
  \hat\gamma_n^{**}(\omega), & \mbox{if\;} L_n\leq C_\delta \\
  \alpha, & \mbox{if\;} L_n> C_\delta. 
 \end{cases} \]

Therefore, a process $Q_n^*$ defined by (\ref{theQ}) -- with $Q_n^*$
instead of $Q_n$ and $\hat\gamma_n^*$ instead of $\hat\gamma_n$ -- will satisfy the above argument. But then,
as $Q_n$ agree with $Q_n^*$ on a set of probability at least $1-2\delta$, the probability that $Q_n$ fails to
converge must be less than $2\delta$. But since $\delta$ is arbitrary, this must in fact be zero.
\end{proof}

By making some small adjustment in the proof of Theorem \ref{genlim}, we get the following lemma,
which is only needed in establishing - together with Section \ref{limappgenpol} - that when 
considering the application to the generalized P\'olya urn and $\hat\gamma_n\to 1/2$ 
we have $\hat\gamma_n-1/2=\lilo(1/\ln n)$, as remarked at the end of Section \ref{alfaejnoll}.
\begin{lemma}\label{fasterthanln}
The same assumptions as Theorem \ref{genlim} but $\alpha=1/2$, implies that
$n^\beta (X_n-p)$ converges a.s.\ to 0, for any $\beta<1/2$.
\end{lemma}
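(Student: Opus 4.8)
The plan is to revisit the proof of Theorem~\ref{genlim} and observe that its entire machinery survives the substitution $\alpha=1/2$, save for the convergence of the term $G_n$. Recall the decomposition (\ref{qq}), namely
\[ n^\alpha Q_n = Q_0 n^\alpha P_\alpha(1,n) + G_n + F_n. \]
With $\alpha=1/2$, the first term still converges by Lemma~\ref{prodlemma}, and the author explicitly remarked after (\ref{secsum}) that the convergence argument for $F_n$ was already written so as to cover the case $\alpha=1/2$; so $F_n$ still converges. The only part of the original argument that genuinely used $\alpha<1/2$ was the invocation of Lemma~\ref{improve} for the martingale-type sum $G_n^*=\sum_1^n k^\alpha \gamma_k U_k l_k$: there the square-summability required $|k^\alpha\gamma_k U_k|^2=\bigo(k^{2\alpha-2})$ to give a convergent series, which fails precisely when $\alpha=1/2$. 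Thus the first move is to isolate this one obstruction and treat $G_n$ by a different device.

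The key replacement idea is to pass to a slightly smaller exponent. Fix $\beta<1/2$ and write $n^\beta Q_n = n^{\beta-1/2}\cdot n^{1/2}Q_n$. Since we have just argued that $n^{1/2}Q_n$ stays bounded along the convergent pieces coming from $P_{1/2}$ and $F_n$, the only thing left to control at the reduced exponent $\beta$ is the analogue of $G_n$, namely $G_n^{(\beta)}=\sum_{k=1}^n \frac{1}{k}\hat U_k\, n^{\beta}P_{1/2}(k+1,n)=\sum_{k=1}^n k^{\beta-1}\cdot k\gamma_k U_k\cdot (n/k)^{\beta}P_{1/2}(k+1,n)$. By Lemma~\ref{prodlemma}, $P_{1/2}(k+1,n)=(k/n)^{1/2}(1+\bigo(k^{-1}))$, so $n^{\beta}P_{1/2}(k+1,n)=n^{\beta-1/2}k^{1/2}(1+\bigo(k^{-1}))$. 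Hence the summands carry the extra factor $n^{\beta-1/2}$, which, together with Kronecker's lemma (Lemma~\ref{Kron}), is what forces the sum to $0$. Concretely, the plan is to set $a_k=k^{1/2}\gamma_k U_k/k = k^{-1/2}\gamma_k U_k$ and show $\sum_k a_k$ converges a.s.\ via Lemma~\ref{improve} (now with exponent $1/2$ appearing as $|k^{-1/2}\gamma_k U_k|^2=\bigo(k^{-3})$, which is summable, and $|\ex_{k-1}[k^{-1/2}\gamma_k U_k]|=\bigo(k^{-5/2})$), then apply Lemma~\ref{Kron} with $b_k=k^{1/2-\beta}$ increasing to infinity to conclude that the weighted tail divided by $b_{n+1}=(n+1)^{1/2-\beta}$ tends to $0$; this is exactly $n^{\beta}\cdot(\text{the }G\text{-part})\to 0$.

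The remaining bookkeeping mirrors the original proof verbatim. First one establishes the a.s.\ rate $\ex Q_k^2=\bigo(k^{-1+\epsilon})$ (the $\alpha=1/2$ instance of the bound $\ex Q_k^2=\bigo(k^{-2\alpha+\epsilon})$ derived from (\ref{tobc}) and Lemma~\ref{Venter}), which feeds the Beppo--Levi estimate showing $F_n$ converges; then one reduces to the two \emph{extra} assumptions---uniform convergence $\hat\gamma_n\to 1/2$ and the representation (\ref{Lk}) with bounded $L_k$---and removes them exactly as before by the Corollary~\ref{egocorr} localization together with the truncation of $L_n$ at a level $C_\delta$, so that the conclusion holds off an event of probability at most $2\delta$ and hence a.s. The main obstacle, and the only genuinely new point, is the $G_n$ step: one must verify that replacing Lemma~\ref{improve} (which no longer yields a convergent series at the critical exponent) by the combination of a convergent series for the renormalized increments $k^{-1/2}\gamma_k U_k$ and Kronecker's lemma genuinely delivers $n^{\beta}G_n\to 0$ rather than mere boundedness, and that the comparison between $l_{k,n}$ and its limit $l_k$---contributing an error $\frac{1}{n}\sum_1^n k^{\beta-1}=\bigo(n^{\beta-1})\to 0$---does not reintroduce a divergent contribution. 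Once that is checked, all three terms of the decomposition, multiplied by $n^{\beta}$, tend to $0$ a.s., giving $n^{\beta}(X_n-p)\to 0$.
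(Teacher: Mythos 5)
Your overall strategy is the paper's own: keep the decomposition (\ref{qq}) at $\alpha=1/2$, note that the first term and $F_n$ still converge (the paper's remark just before (\ref{secsum}) covers $F_n$ at $\alpha=1/2$), so that after multiplication by $n^{\beta-1/2}$ both vanish, and then treat $G_n$ by combining Lemma \ref{improve} with Kronecker's lemma (Lemma \ref{Kron}). The paper does exactly this: it fixes $\epsilon\in(0,1/2-\beta)$, writes $n^{\beta-1/2}G_n=n^{-(1/2-\beta-\epsilon)}\sum_{k=1}^n k^{1/2-\epsilon}(k/n)^\epsilon l_{k,n}\gamma_k U_k$, applies Lemma \ref{improve} to $\sum_k k^{1/2-\epsilon}l_k\gamma_k U_k$ and then Lemma \ref{Kron} with weights $k^\epsilon$.

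However, your concrete Kronecker step is wrong as written. With $a_k=k^{-1/2}\gamma_k U_k$ and $b_k=k^{1/2-\beta}$, Lemma \ref{Kron} controls $T_n/b_{n+1}$ where $T_n=\sum_{1}^n b_ka_k=\sum_{1}^n k^{-\beta}\gamma_k U_k$, i.e.\ it yields $n^{\beta-1/2}\sum_{1}^n k^{-\beta}\gamma_k U_k\to 0$. That quantity is not $n^{\beta}$ times the $G$-part: by your own computation the $G$-part equals $n^{\beta-1/2}\sum_{1}^n k^{1/2}\gamma_k U_k\, l_{k,n}$, so each summand is off by a factor $k^{1/2+\beta}$, and the claimed identity fails. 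The pairing that works is $a_k=k^{\beta}\gamma_k U_k l_k$ and $b_k=k^{1/2-\beta}$, so that $b_ka_k=k^{1/2}\gamma_k U_k l_k$. Then Lemma \ref{improve} applies because $|k^{\beta}\gamma_k U_k|^2=\bigo(k^{2\beta-2})$ is summable precisely when $\beta<1/2$ -- this is where the hypothesis $\beta<1/2$ genuinely enters, mirroring $\alpha<1/2$ in Theorem \ref{genlim}; in your version $\beta<1/2$ is used only to make $b_k$ increase, which is a symptom of the mismatch -- and Kronecker then gives $n^{\beta-1/2}\sum_{1}^n k^{1/2}\gamma_k U_k l_k\to 0$, after which replacing $l_k$ by $l_{k,n}$ costs $\bigo(n^{\beta-1})\to0$. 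Note also an internal inconsistency: your error estimate $\frac1n\sum_{1}^n k^{\beta-1}$ for the $l_{k,n}$ versus $l_k$ comparison tacitly presupposes summands of size $k^{\beta}\gamma_k U_k$, not your stated $a_k$. Once $a_k$ is corrected, your argument coincides with the paper's proof, up to choosing $\epsilon=1/2-\beta$ exactly rather than $\epsilon<1/2-\beta$ (both choices work; the paper's strict inequality simply makes the leftover prefactor tend to zero as well).
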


\begin{proof}
 As in the proof of Theorem \ref{genlim}, equation.\ (\ref{qq}) with $\alpha=1/2$, we can write 
\begin{equation}\label{qq2} 
n^\beta Q_n=n^{\beta-1/2}Q_0l_{1,n}+ n^{\beta-1/2} G_n+ n^{\beta-1/2} F_n. 
\end{equation}
The first term on the right hand side of (\ref{qq2}) obviously tends to zero a.s., since
$l_{1,n}$ is convergent and $\beta<1/2$. 

Fix an $\epsilon\in (0,1/2-\beta)$. Write the second term of the right hand side of (\ref{qq2}) as
\begin{equation} \label{less}
n^{\beta-1/2}G_n=\frac1{n^{1/2-\beta-\epsilon}}\sum_{k=1}^nk^{1/2-\epsilon}(k/n)^\epsilon l_{k,n}\gamma_k U_k 
\end{equation}
First, compare the sum in (\ref{less}) with the  sum $\sum_1^n k^{1/2-\epsilon} l_k\gamma_k U_k$,
which by Lemma \ref{improve} is convergent. Then $\sum_{1}^n(k/n)^\epsilon k^{1/2-\epsilon} l_{k}\gamma_k U_k$
converges to 0 by Lemma \ref{Kron}. The absolute difference to 
$\sum_1^n (k/n)^\epsilon k^{1/2-\epsilon} l_{k,n}\gamma_k U_k$ tends to zero, so
this latter sum must also tend to zero, as well as the right hand side of (\ref{less}).

Finally, since $F_n$ is convergent (shown in the proof of Theorem \ref{genlim},
a remark made just before  (\ref{secsum})), certainly
$n^{\beta-1/2} F_n$ will tend to zero.
\end{proof}

\subsection{On the application of Theorem \ref{genlim} to generalized Pólya urns} \label{limappgenpol}
In this section we will verify condition (\ref{gammas}) of Theorem \ref{genlim} for the generalized Pólya
urn considered in Section \ref{genpol} for nonsingular replacement matrices (i.e.\
when the matrix (\ref{repmat}) has $ad\neq bc$, see also Remark \ref{singular}). The singular
case actually has $\hat\gamma=1$, so is not applicable to Theorem \ref{genlim}.
 Since the drift $f$ for such a process is
always twice (in fact, infinitely) differentiable, it suffices, by Remark \ref{as_gammas}, to check that 
$n\gamma_n-\gamma=\bigo(|X_n-p|+1/n)$.  Recall that $\gamma_n=1/T_n$, where 
$T_n$ is the total number of balls in the urn at time $n$ and that $p$ denotes a (stable)
zero of $f$, defined in Section \ref{prel} , i.e.\ 
\begin{equation}
\label{p} \alpha p^2+(a-2c-d)p+c=0. 
\end{equation}
Now, if $\alpha=0$, i.e.\ $a+b=c+d$, then it is easy to show that $n/T_n-1/(a+b)=\bigo(1/n)$ and this
is essentially done in (\ref{gjort2}), so assume $\alpha\neq 0$.
We can write (\ref{p})
as $p[\alpha p+a-c]=(c+d)p-c$. If $\alpha p+a-c=0$, then necessarily $(c+d)p-c=0$ and these
two facts would imply that $(c-a)/\alpha=c/(c+d)$ which implies $bc=ad$, i.e.\ a singular matrix,
which is a case we have excluded from  consideration. Hence, $p\alpha+a-c\neq 0$. Analogously,
one can show that $p\alpha-c-d\neq 0$.

Let $W_n^*$ denote the number of times a white
ball has been drawn, so that $W_n$, the number of white balls, and $T_n$ can be described
by
\begin{align*}
 W_n &= W_0+cn+(a-c)W_n^* \quad\mbox{and} \\
T_n &= T_0+(c+d)n-\alpha W_n^*, \;\mbox{respectively, and $\alpha=c+d-a-b$.}
\end{align*}
Note that $W_n^*/n$ will also converge to $p$.  So, if $n$ is large $W_n^*/n$ is close to
$p$. 

It is straightforward to check that, with $T=\lim_n T_n/n=c+d-\alpha p$,
\begin{equation}\label{theT} 
\frac n{T_n}-\frac{1}{T}=\frac{\alpha(W_n^*/n-p)-T_0/n}{(c+d-\alpha p)(T_0/n+c+d-\alpha W_n^*/n)},  
\end{equation}
From the above discussion, we know that we are not dividing by $0$ in the last equation,
at least not when $n$ is large (which is what matters here). 

Next, $X_n=W_n/T_n$, being also a function of $W_n^*$, can be inversed to yield 
\begin{equation*} \frac{W_n^*}{n}=\frac
 { \frac{T_0X_n-W_0}{n} + (c+d)X_n-c  }  { a-c+\alpha X_n  }, \end{equation*}
where again, we are not dividing by zero (if $n$ is large). From the last equation,
it is straightforward to calculate that 
\[ \frac{W_n^*}{n}-p=\frac{W_n^*}{n}-\frac{p(c+d)-c}{a-c+\alpha p} =C_1(n)(X_n-p)+C_2(n)/n, \]
for  (eventually) bounded sequences $C_1(n)$ and $C_2(n)$ whose precise values are unimportant.  
Hence, in the nonsingular case $ad\neq bc$,
\begin{equation}\label{hatten2} W_n^*/n-p=\bigo(|X_n-p|+1/n). \end{equation}
and by (\ref{theT}) we have
$\gamma_n-\gamma=\bigo(|X_n-p|+1/n)$. 

So, together with Remark \ref{as_gammas}, since $h(x)=-f(x)/(x-p)$ is differentiable 
for the generalized P\'olya urn process, we know that such a process fulfills the
assumption (\ref{qq}). Also, when $\alpha=1/2$, we know know, from the above and Lemma
\ref{fasterthanln}, that $\hat\gamma-1/2$ is $\lilo(n^{-\beta})$ for any $\beta<1/2$ and
hence $\lilo(1/\ln n)$. This settles the query at the end of Section \ref{alfaejnoll}.

\subsection{A note on the problem of non-normal limiting distributions}\label{anotponld}
It is tempting to try to find the limiting distribution for stochastic approximation
algorithms in the case when $\hat\gamma\in(0,1/2)$. Any such result must of course be
applicable to any process that fits into the stochastic approximation
scheme, especially generalized Pólya urns. Limit theorems for this urn model are well studied,
see e.g.\ the references made in Section \ref{genpol}, and it is therefore known that
the limiting distributions can be quite cumbersome.

That the situation is more complicated for $\hat\gamma\in(0,1/2)$ 
 -- as opposed to $\hat\gamma\geq 1/2$ -- is already seen from (\ref{qq}). We would assume e.g.\ that
the distribution depends on the initial condition of the urn, which is not the case when 
$\hat\gamma\geq 1/2$ and the central limit theorem applies.

In the following example we exhibit two processes converging to the same point, 
and for which the parameters  $\gamma$, $h(p)$ and $\sigma^2\;(=\lim_n\ex_n U_{n+1}^2)$   are the same, yet the 
limiting distributions are different, even if we start with identical initial conditions. 
\begin{exempel}
Consider two generalized P\'olya urn processes. Let
$X_n$ and $Z_n$ be the proportion of white balls under the replacement matrices
\begin{equation*}
 \left( \begin{array}{c l}
         4 & 1 \\ 1 & 4
       \end{array} \right)\quad\mbox{and}\quad
\left( \begin{array}{c l}
        3 & 0 \\ 2 & 5
       \end{array} \right),\;\mbox{respectively,} 
\end{equation*}
with otherwise identical initial conditions. 

The drift functions are 
\begin{equation*} 
g_X(x)=-2(x-1/2) \mand g_Z(x)=4(x-1)(x-1/2), 
\end{equation*}
respectively, and hence  both processes will converge to $p=1/2$ a.s.\ (by
Theorem 6 of \cite{lic}).
The normalized step length $n\gamma_n$ for the $Z$ and $X$ process will both tend to $1/5$, 
in the former case this is due to $3\frac 12+(2+5)(1-\frac12)=5$.  Since also 
 $h_X(p)=2$ and $h_Z(p)=-4(\frac12-1)=2$, that both process have $\hat\gamma=2/5$.
The error functions are $\e_X(x)=$ $x(1-x)3^2$ and $\e_Z(z)=z(1-z)[1+4z]^2$, respectively, 
with $\e_Z(p)=\e_X(p)=9/4$. 

Theorem 1.3 (iii) of \cite{Jan06}  gives the asymptotic behavior of the number 
of white balls\footnote{Note that 
the roles of black and white is reversed in \cite{Jan06}. } for the $Z$-process. The result,
translated to the proportion of white balls instead of total number thereof,
is that
\[ n^{2/5}(1/2-Z_n) \limd \frac 1{3\cdot 2^{8/5}}W, \]
where $W$ is a distribution given in terms of its characteristic function. 
Now, $W$ is somewhat elusive, but section 8 of \cite{Jan06} has results 
on some of its properties. More specifically, Theorem 8.2 reveals that 
if the urn initially contains balls of both colors, then 
$\ex|W|<\infty$ exactly when $B_0>3$, and, moreover
\begin{equation} \label{komplicerad}
 \ex W= \frac{1}{\Gamma(B_0/5)}\left( W_0\Gamma((B_0-3)/5)-5\Gamma((B_0+2)/5)  \right). 
\end{equation}

The urn model describing $X$ is known as Friedman's urn. In Theorem 3.1 of \cite{Fre65} one can find the
following result (as a special case)
\[ 10n^{2/5}(1/2-X_n) \limd W', \]
where $W'$ is a random variable, not identified. However, if $W_0=B_0>0$ then $W'$ is symmetric 
about 0 (but \emph{not} normal).    
Of course, this symmetry should come as no surprise since, given symmetrical initial conditions, black and white are 
interchangeable due to the symmetry of the replacement matrix.

Then, as (\ref{komplicerad}) typically is not 0 when evaluated at $B_0=W_0>3$, we can conclude
that  $Z_n$ and $X_n$ in general has different limiting distributions. 
\end{exempel}

We end this section with some remarks concerning situations we have not touched upon in this study.

\begin{rem}\label{limnoll}
Another case that may arise is that $h(p)=0$, i.e.\ that the drift function $f$ of a
stochastic approximation algorithm has a double zero at the stable $p$. 
A know application that has this property is the 
generalized Pólya urn with replacement matrix 
\begin{equation*}
 \left( \begin{array}{c l}
         a & 0 \\ b & a
       \end{array} \right)
\end{equation*}
where $a,b>0$. Then the drift function is $f(x)=b(x-1)^2$. Theorem 
1.3 (iv) of \cite{Jan06} has a result for this case. 
\end{rem}

\begin{rem} \label{singular}
The fraction of white balls in the urn model with singular replacement matrix
\begin{equation*}
 \left( \begin{array}{c l}
         a & b \\ \lambda a & \lambda b
       \end{array} \right), \quad \lambda>0,\;a+b>0,
\end{equation*}
will converge to $p=a/(a+b)$. Here we have $\gamma^{-1}=a+b\lambda=h(p)$ so that
$\hat\gamma=1$ which - if it was not for vanishing variance, i.e.\ $\sigma^2=0$ -
would imply a central limit theorem. Note that the convergence is always monotone,
if $X_0<p$ then $X_n<X_{n+1}<p$ and vice versa if $X_0>p$ (if 
$X_0=p$ then $X_n=p$ for all $n$). 	
\end{rem}

\begin{rem}\label{limnej}
Another quite different problem is if the drift function $f$ is \emph{not} differentiable
at the stable point $p$. Then $h(x)=f(x)/(x-p)$ is not continuous and 
$\hat\gamma_n=n\gamma_n h(X_{n-1})$ may not tend to a limit. The papers \cite{KP95} and
\cite{Ker78} deal with this situation. 
\end{rem}

\vspace{0.3cm}
\noindent\textbf{Acknowledgements} \\
I am very grateful to Svante Janson for valuable input and interesting discussions.

\newpage


\begin{thebibliography}{KW52}
\bibitem[Chu54]{Chung} K.\ L.\ Chung: 
On a stochastic approximation method.
\emph{Ann.\ Math.\ Statist.}, \textbf{25} (1954) 463--483.

\bibitem[BP85]{BP85} A.\ Bagchi, A.\ K.\ Pal: 
Asymptotic normality in the generalized  Pólya-Eggen\-berger urn model, with an application to computer data structures.
\emph{SIAM J. Algebraic Discrete Methods}, \textbf{6} no.\ 3 (1985)  394--405. 

\bibitem[Ben99]{Ben99} M.\ Bena\"{\i}m: 
\emph{Dynamics of stochastic approximation algorithms}.
S\'eminaire de Probabilit\'es. Lectures Notes in Mathematics, \textbf{1709}, Springer (1999), 1-68.

\bibitem[Bur56]{Bur56} D.\ L.\ Burkholder: 
On a class of stochastic approximation procedures.
\emph{Ann.\ Math. Statist.}, \textbf{27} (1956) 1044--1059.

\bibitem[Coh80]{Cohn} D.\ Cohn: 
\emph{Measury Theory}. 
Birkh\"auser (1980).

\bibitem[Der56]{Der56} D.\ Derman: 
An application of Chung's lemma to the Kiefer-Wolfowitz stochastic approximation procedure.
 \emph{Ann.\ Math. Statist.}, \textbf{27} (1956) 529--532.

\bibitem[Fab67]{Fab67} V.\ Fabian: 
Stochastic approximation of minima with improved asymptotic speed.
\emph{Ann.\ Math.\ Statist.}, \textbf{38} (1967), 191--200.

\bibitem[Fab68]{Fabian} V.\ Fabian: 
On asymtotic normality in stochastic approximation.
 \emph{Ann.\ Math.\ Statist.}, \textbf{39} (1968), 1327--1332.

\bibitem[Fre65]{Fre65} D.\ Freedman: 
Bernard Friedman's urn.
 \emph{Ann.\ Math.\ Statist.}, \textbf{36} no.\ 3 (1965), 956--970. 

\bibitem[Gou93]{Gou93} R.\ Gouet: 
Martingale functional central limit theorems for a generalized Pólya urn.
\emph{Ann. Probab.}, \textbf{21} no.\ 3 (1993)  1624--1639. 

\bibitem[Gut05]{Gut} A.\ Gut: 
\emph{Probability: A Graduate Course}.
 Springer (2005).

\bibitem[Jan04]{Jan04} S.\ Janson: Functional limit theorems for multitype branching processes and generalized P\'olya urns. 
\emph{Stochastic Process.\ Appl.}, \textbf{110} no.\ 2 (2004), 177--245.

\bibitem[Jan06]{Jan06} S.\ Janson:
 Limit theorems for triangular urn schemes.
\emph{Probab.\ Theory Related Fields}, \textbf{134} no.\ 3 (2006), 417--452.

\bibitem[KP95]{KP95} Y.\ Kaniovski, G.\ Pflug: 
Non-standard limit theorems for urn models and stochastic approximation procedures.
 \emph{Comm. Statist. Stochastic Models}, \textbf{11} no.\ 1 (1995) 79--102. 

\bibitem[Ker78]{Ker78} G.\ D.\ Kersting:
A weak convergence theorem with application to the Robbins-Monro process.
\emph{Ann. Probab.}, \textbf{6} no.\ 6 (1978) 1015--1025. 

\bibitem[KW52]{KW52} J.\ Kiefer, J.\ Wolfowitz: 
Stochastic estimation of the maximum of a regression function. 
\emph{Ann.\ Math.\ Statist.}, \textbf{23} (1952) 462-466.

\bibitem[MP73]{MP73} P.\ Major, P.\ Révész: 
A limit theorem for the Robbins-Monro approximation.
\emph{Z. Wahrscheinlichkeitstheorie und Verw. Gebiete}, \textbf{27} (1973) 79--86. 

\bibitem[Ren09]{lic} H.\ Renlund:
Generalized P\'olya urns via stochastic approximation. 
Uppsala University U.U.D.M Report 2009:7.

\bibitem[RM51]{RM51} H.\ Robbins, S.\ Monro: 
A stochastic approximation method. \emph{Ann.\ Math.\ Statist.}, 
\textbf{22} (1951) 400--407. 

\bibitem[Sac58]{Sac58} J.\ Sacks: 
Asymptotic distribution of stochastic approximation procedures.
\emph{Ann. Math. Statist.}, \textbf{29} (1958) 373–405. 

\bibitem[Smy96]{Smy96} R.\ T.\ Smythe: 
Central limit theorems for urn models.
\emph{Stochastic Process. Appl.}, \textbf{65} no.\ 1 (1996) 115--137. 

\bibitem[Ven66]{Ven66} H.\ J.\ Venter:
On Dvoretzky stochastic approximation theorems.
\emph{Ann. Math. Statist.}, \textbf{37} (1966) 1534--1544. 
\end{thebibliography}
\end{document}